\def\today{\ifcase\month\or
  January\or February\or March\or April\or May\or June\or
  July\or August\or September\or October\or November\or December\fi
  \space\number\day, \number\year}
\DeclareMathOperator{\sgn}{\mathrm{sgn}}
 \newtheorem{theorem}{Theorem}
 \newtheorem{lemma}[theorem]{Lemma}
 \newtheorem{proposition}[theorem]{Proposition}
 \newtheorem{corollary}[theorem]{Corollary}
 \theoremstyle{definition}
 \theoremstyle{remark}
 \newtheorem{remark}[theorem]{Remark}
 \newcommand{\mc}{\mathcal}
 \newcommand{\C}{\mathbb{C}}
 \newcommand{\R}{\mathbb{R}}
 \newcommand{\N}{\mathbb{N}}
 \newcommand{\Z}{\mathbb{Z}}
\newcommand{\tQ}{\widetilde{Q}}
 \newcommand{\bx}{\boldsymbol{x}}
 \newcommand{\dx}{\text{\rm d}x}
\begin{document}

\title[Approximations to the Bernoulli periodic functions]{Sharp approximations to the Bernoulli periodic functions by trigonometric polynomials}
\author[E. Carneiro]{Emanuel Carneiro}
\thanks{The author was supported by CAPES/FULBRIGHT grant BEX 1710-04-4.}
\date{\today}
\subjclass[2000]{Primary 42A05, 42A10, 41A52}
\keywords{approximation, Bernoulli functions, trigonometric polynomials}
\address{Department of Mathematics, University of Texas at Austin, Austin, TX 78712-1082.}
\email{ecarneiro@math.utexas.edu}
\allowdisplaybreaks
\numberwithin{equation}{section}

\begin{abstract} 
We obtain optimal trigonometric polynomials of a given degree $N$ that majorize, minorize and approximate in $L^1(\R/\Z)$ the Bernoulli periodic functions. These are the periodic analogues of two works of F. Littmann (\cite{Lit} and \cite{Lit2}) that generalize a paper of J. Vaaler (\cite{V}). As applications we provide the corresponding Erd\"{o}s-Tur\'{a}n-type inequalities, approximations to other periodic functions and bounds for certain Hermitian forms.
\end{abstract}

\maketitle
\section{Introduction}
An entire function $F(z)$ is said to be of exponential type $2\pi\delta$ if, for any $\epsilon >0$,
$$|F(z)| \leq A_{\epsilon} e^{|z|(2\pi \delta + \epsilon)}$$
for all complex $z$ and some constant $A_{\epsilon}$ depending on $\epsilon$. We denote by $E(2\pi \delta)$ the set of all functions of exponential type $2\pi \delta$ which are real on $\R$. Given a function $f:\R \to \R$, the extremal problem is the search for a function $F \in E(2\pi \delta)$ such that
\begin{align}\label{Intro1}
\begin{split}
 \textrm{(a)} &\ \ F(x) \geq f(x) \ \ \textrm{for all} \ \  x \in \R,  \\
 \textrm{(b)} &\ \ \int_{\R} (F(x) - f(x))\, \dx = \displaystyle\min_{\stackrel{G \in E(2\pi\delta)}{G\geq f}} \int_{\R} (G(x)-f(x))\, \dx 
\end{split}
\end{align}
A function $F \in E(2\pi\delta)$ satisfying (\ref{Intro1}) is called an extremal majorant of exponential type $2\pi\delta$ of $f$. Extremal minorants are defined analogously.\\

In the special case $f(x) = \sgn(x)$, an explicit solution to the problem (\ref{Intro1}) was found in the 1930's by A. Beurling, but his results were not published at the time of their discovery. Later, Beurling's solution was rediscovered by A. Selberg, who realized its importance in connection with the large sieve inequality in analytic number theory (see \cite{S}). An account of these functions, their history and many other applications can be found in the survey \cite{V} by J. Vaaler. \\

In the paper \cite{V}, Vaaler addresses the periodic analogue of the extremal problem (\ref{Intro1}): given a periodic function $\psi:\R/\Z \to \R$ and a nonnegative integer $N$, one wants to find a trigonometric polynomial $x \mapsto P(x;N)$ of degree at most $N$ such that
\begin{align}\label{Intro1.1}
\begin{split}
 \textrm{(a)} &\ \ P(x;N) \geq \psi(x) \ \ \textrm{for all} \ \  x \in \R/\Z,  \\
 \textrm{(b)} &\ \ \int_{\R/\Z} (P(x;N) - \psi(x))\, \dx  \ \ \textrm{is minimal} 
\end{split}
\end{align}
In this case $P(x;N)$ will be an extremal majorant. Extremal minorants are defined analogously. One can also consider the problem of best approximating $\psi(x)$ in the $L^1(\R/\Z)$-norm, i.e. we seek $P(x;N)$ such that the integral
\begin{equation}\label{Intro1.3}
 \int_{\R/\Z} |P(x;N) - \psi(x)|\, \dx 
\end{equation}
is minimal. The periodic version of the function $\sgn(x)$ appearing in \cite{V} is the sawtooth function $\Psi: \R/\Z \to \R$ defined by
\begin{equation}\label{Psi}
\Psi(x) =
\left\{
\begin{array}{lcc}
 x - [x] - 1/2 & \textrm{if} & x\notin \Z\\
 0 & \textrm{if}& x\in \Z
 \end{array}
 \right.
\end{equation}
where $[x]$ is the integer part of $x$. Vaaler then solves problems (\ref{Intro1.1}) and (\ref{Intro1.3}) for this function $\Psi(x)$, obtaining applications to the theory of uniform distribution (we return to this subject in Section 4).\\

Recently, F. Littmann extended the ideas of Beurling and Selberg to solve the extremal problem (\ref{Intro1}) for the functions $f(x) = \sgn(x)x^n$, $n \in \N_0 = \N \cup \{0\}$. He found not only the unique extremal majorants and minorants (see \cite{Lit}), but also the best entire approximation in the $L^1(\R)$-norm (see \cite{Lit2}) to these functions. The purpose of this paper is to transfer these two works of Littmann to periodic versions, namely to solve the extremal problems (\ref{Intro1.1}) and (\ref{Intro1.3}) for the Bernoulli periodic functions $\mc{B}_n(x)$, thus generalizing the periodic machinery developed by Vaaler in \cite{V}. We briefly outline our results below.\\

The Bernoulli polynomials $B_n(x)$ can be defined by the power series expansion
\begin{equation}\label{Bernoulli polynomials}
\dfrac{t e^{xt}}{e^t -1} = \sum_{n=0}^{\infty}\dfrac{B_n(x)}{n!}t^n
\end{equation}
where $|t| < 2\pi$, and the Bernoulli periodic functions $\mathcal{B}_n(x)$ by
\begin{equation} \label{Intro2}
\mathcal{B}_n(x) = B_n(x - [x])
\end{equation}
For $n\geq 1$, the Bernoulli periodic functions have the Fourier expansion
\begin{equation}\label{Bernoulli periodic}
\mathcal{B}_n(x) = -\dfrac{n!}{(2\pi i)^n} \sum_{\stackrel{k=-\infty}{k \neq 0}}^{\infty}\dfrac{1}{k^n} \, e(kx)
\end{equation}
Observe that, apart from a renormalization at $x=0$, we have 
\begin{equation*}
\mathcal{B}_1(x) = \Psi(x)
\end{equation*}
It is a well-known fact that the function $B_{2n}(x) \, (n\geq 1)$ has exactly one zero in the interval $(0,1/2)$, which we denote by $z_{2n}$ (put $z_0 = 0$). By a result of D.H. Lehmer (\cite{Leh}) the inequality
\begin{equation}\label{Sec2.1}
1/4- \pi^{-1}2^{-2n-1} < z_{2n} < 1/4 
\end{equation}
holds for $n\in \N$. The odd Bernoulli polynomials $B_{2n+1}(x)$ have zeros at $x=0$ and $x=1/2$, but no zeros in the interval $(0,1/2)$. Define two sequences $\{\alpha_n\}_{n \in \N_0}$ and $\{\beta_n\}_{n \in \N_0}$ by:
\begin{equation} \label{Intro3}
\begin{array}{cclcccl}
\alpha_{4k}&:=& 1-z_{4k}& & \beta_{4k}& := & z_{4k} \\
\alpha_{4k+1}&:=& 0& & \beta_{4k+1}& := & 1/2 \\
\alpha_{4k+2}&:=& z_{4k+2}& & \beta_{4k}& := & 1- z_{4k+2} \\
\alpha_{4k+3}&:=& 1/2 & & \beta_{4k+3}& := & 0 
\end{array}
\end{equation}
We point out that $B_{n+1}(x)$ assumes its maximum in $[0,1]$ at $x = \alpha_n$, and its minimum in $[0,1]$ at $x = \beta_n$ (cf. \cite[Lemma 5]{Lit}). Our first result is

\begin{theorem}\label{thm2.3}
Let $n$ and $N$ be nonnegative integers. There exist real valued trigonometric polynomials $x \mapsto P_{n+1}(x;N,\beta_n)$ and $x \mapsto P_{n+1}(x;N,\alpha_n)$ of degree at most $N$ such that 
\begin{equation}
P_{n+1}(x;N,\beta_n) \leq \mathcal{B}_{n+1}(x) \leq P_{n+1}(x;N,\alpha_n)
\end{equation}
at each point $x \in \R/\Z$. Moreover,\vspace{0.15cm}

\begin{itemize}
 \item[(i)] If $Q(x)$ is a real trigonometric polynomial of degree at most $N$ that satisfies $Q(x) \leq \mathcal{B}_{n+1}(x)$ for all $x \in \R/\Z$, then
\begin{equation}\label{Sec2.8.1}
\int_{\R/\Z} (\mathcal{B}_{n+1}(x) - Q(x))\, \dx \geq - \frac{B_{n+1}(\beta_n)}{(N+1)^{n+1}}
\end{equation}
with equality if and only if $Q(x) = P_{n+1}(x;N,\beta_n)$.\vspace{0.15cm}

\item[(ii)] If $\tQ(x)$ is a real trigonometric polynomial of degree at most $N$ that satisfies $\mathcal{B}_{n+1}(x) \leq \tQ(x)$ for all $x \in \R/\Z$, then
\begin{equation}\label{Sec2.8}
\int_{\R/\Z} (\tQ(x) - \mathcal{B}_{n+1}(x))\, \dx \geq  \frac{B_{n+1}(\alpha_n)}{(N+1)^{n+1}}
\end{equation}
with equality if and only if $\tQ(x) = P_{n+1}(x;N,\alpha_n)$. 
\end{itemize}

\end{theorem}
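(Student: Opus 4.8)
The plan is to reduce the periodic problem to Littmann's extremal functions on $\R$ via a periodization argument, exactly as Vaaler does for $\Psi(x) = \mathcal{B}_1(x)$. Littmann (\cite{Lit}) provides, for each $\delta > 0$, entire majorants and minorants $F_{n+1}(z)$ of exponential type $2\pi\delta$ for $\sgn(x)x^n$ (equivalently, after scaling and adjusting by a polynomial, for the function whose periodization is $\mathcal{B}_{n+1}$). The key structural facts I would extract from \cite{Lit} are: (1) $F - f$ is nonnegative and integrable; (2) $F$ interpolates $f$ together with enough derivatives at a prescribed arithmetic progression of nodes, so that the periodization $\sum_{k} (F(x+k)-f(x+k))$ telescopes or can be evaluated by Poisson summation; and (3) the $L^1(\R)$-value of $F-f$ is explicitly $B_{n+1}(\alpha_n)\delta^{-(n+1)}$ up to normalization. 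Setting $\delta = N+1$ and periodizing should produce a trigonometric polynomial of degree at most $N$ (the type-$2\pi(N+1)$ condition collapses to degree $\le N$ after summation, since the $k=\pm(N+1)$ Fourier modes vanish by the interpolation at integer-spaced nodes), whose integral against $1$ over $\R/\Z$ equals the $L^1(\R)$-norm of $F_{n+1} - f$.

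Concretely, for part (ii) I would define $P_{n+1}(x;N,\alpha_n) := \sum_{k\in\Z} F_{n+1}(x+k)$ where $F_{n+1}$ is Littmann's majorant of type $2\pi(N+1)$, check that the sum converges and defines a trig polynomial of degree $\le N$, and verify $P_{n+1}(x;N,\alpha_n) \ge \mathcal{B}_{n+1}(x)$ termwise-after-summation. Then $\int_{\R/\Z}(P_{n+1}(x;N,\alpha_n) - \mathcal{B}_{n+1}(x))\,\dx = \int_{\R}(F_{n+1}(x) - f(x))\,\dx = B_{n+1}(\alpha_n)(N+1)^{-(n+1)}$, which gives the claimed value. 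Part (i) is identical with the minorant and $\beta_n$, noting that $B_{n+1}(\beta_n) = -B_{n+1}(\alpha_n)$ in the appropriate cases so the sign in \eqref{Sec2.8.1} comes out right (this is where the parity bookkeeping in \eqref{Intro3} is used).

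For the lower bound and uniqueness in (i) and (ii): given any trig polynomial $Q$ of degree $\le N$ with $Q \le \mathcal{B}_{n+1}$, I would introduce a quadrature rule. The function $\mathcal{B}_{n+1}(x) - Q(x)$ is nonnegative; evaluating it by a Gauss-type quadrature supported on $N+1$ equally-spaced points shifted appropriately (the nodes being the periodized interpolation nodes of $F_{n+1}$), one shows $\int_{\R/\Z}(\mathcal{B}_{n+1}(x)-Q(x))\,\dx$ is bounded below by a weighted sum of the values of $\mathcal{B}_{n+1}-Q$ at those nodes plus the values of $(\mathcal{B}_{n+1}-Q)'$ — and by the interpolation properties of the extremal construction these combine to exactly $-B_{n+1}(\beta_n)(N+1)^{-(n+1)}$, with equality forcing $Q$ to agree with $\mathcal{B}_{n+1}$ (to first order) at every node, hence $Q = P_{n+1}(x;N,\beta_n)$ since a trig polynomial of degree $\le N$ is determined by such Hermite data at $N+1$ points.

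The main obstacle I anticipate is establishing that the periodization of Littmann's function is genuinely a trigonometric polynomial of degree \emph{at most} $N$ and not merely of degree $N+1$: this requires knowing precisely how $F_{n+1}$ interpolates at the integers (or half-integers) and using Poisson summation to kill the top Fourier coefficient, which in turn depends on delicate sign and interpolation data from \cite{Lit} that must be imported carefully. A secondary technical point is the convergence and termwise manipulation of the periodization sum, since $\sgn(x)x^n$ grows polynomially; this is handled by subtracting off an odd polynomial correction before periodizing (so that what is actually summed decays like $|x|^{-2}$), exactly as in Vaaler's treatment of the $n=1$ case.
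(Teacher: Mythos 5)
Your proposal follows essentially the same route as the paper: Littmann's extremal functions with $\delta = N+1$, periodization of the $O(x^{-2})$ difference $d_n(x;\delta,\alpha)=\delta^{-n}H_n(\delta x;\alpha)-\sgn(x)x^n$ via Poisson summation, the exact quadrature $\int_{\R/\Z}Q = \tfrac{1}{N+1}\sum_{m=0}^{N}Q\bigl(\tfrac{\beta_n+m}{N+1}\bigr)$ at the periodized interpolation nodes for the lower bound, and Hermite data ($2N+1$ conditions from values plus derivatives at the nodes) for uniqueness. One clarification on the obstacle you single out: the top modes do not vanish; rather, since $\widehat{d}_n(t;\delta,\alpha)=-2\,n!/(2\pi i t)^{n+1}$ for $|t|\geq N+1$, all Fourier modes of the periodized error with $|k|\geq N+1$ coincide with those of $\tfrac{2}{n+1}\mathcal{B}_{n+1}$, so the periodization equals $\tfrac{2}{n+1}(\mathcal{B}_{n+1}-P_{n+1})$ with $P_{n+1}$ of degree $\leq N$ by construction (and the sign in (i) comes directly from Littmann's integral formula, with no need for the symmetry $B_{n+1}(\beta_n)=-B_{n+1}(\alpha_n)$, which in fact fails for odd $n$).
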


The extremal trigonometric polynomials $P_{n+1}(x;N,\beta_n)$ and $P_{n+1}(x;N,\alpha_n)$ are explicitly described in Section 2 (equations (\ref{Sec2.5.1}) - (\ref{Sec2.7})).

An interesting remark is that when $n$ is odd, the extremals of the Bernoulli polynomials $B_{n+1}(x)$ in $[0,1]$ are related to the Riemann zeta function (see \cite{Leh}) by 
\begin{align}\label{Riemann}
\begin{split}
B_{2k}(0) &= \dfrac{(-1)^{k-1} (2k)!}{2^{2k-1} \pi^{2k}} \zeta(2k) \\
B_{2k}(1/2) & =  -(1 - 2^{-2k +1}) B_{2k}(0) 
\end{split}
\end{align}

To present the optimal approximations to the Bernoulli periodic functions in the $L^1(\R/\Z)$-norm we recall the Euler polynomials $E_k(x)$ given by the generating function
\begin{equation}\label{EP}
\dfrac{e^{xt}}{e^t +1} = \dfrac{1}{2} \sum_{k=0}^{\infty} E_k(x)\dfrac{t^k}{k!}
\end{equation}
It will also be useful to have the Fourier expansions of the Euler periodic functions
\begin{eqnarray}
E_{2k}(x - [x]) &=& (-1)^k \,\,\, \dfrac{4(2k)!}{\pi^{2k+1}}\, \, \sum_{v=0}^{\infty} \dfrac{\sin(2v+1)\pi x}{(2v+1)^{2k+1}} \label{Eulereven} \\
E_{2k+1}(x - [x]) &=& (-1)^{k+1} \,\,\, \dfrac{4(2k+1)!}{\pi^{2k+2}} \,\, \sum_{v=0}^{\infty} \dfrac{\cos(2v+1)\pi x}{(2v+1)^{2k+2}} \label{Eulerodd} 
\end{eqnarray}
Define the sequence $(\theta_n)_{n \in \N_0}$ by
\begin{equation}\label{Intro4}
\theta_n = \left\{
\begin{array}{cc}
 0 & \textrm{if} \ \ n \ \ \textrm{is even}\\
1/2 & \textrm{if} \ \ n \ \ \textrm{is odd}
\end{array}
\right.
\end{equation}
Our second result is
\begin{theorem}\label{thm3.4}
Let $n$ and $N$ be nonnegative integers. There exists a trigonometric polynomial $x \mapsto R_{n+1}(x;N)$ of degree at most $N$ such that for any trigonometric polynomial $W(x)$ of degree at most $N$ the inequality
\begin{equation}\label{trig2}
\int_{\R/\Z} |W(x) - \mathcal{B}_{n+1}(x)|\, \dx \geq  \frac{|E_{n+1}(\theta_n)|}{(2N+2)^{n+1}}
\end{equation}
holds, with equality if and only if $W(x) = R_{n+1}(x;N)$. 
\end{theorem}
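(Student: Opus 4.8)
The plan is to reduce the $L^1$ approximation problem to a combination of one-sided majorant/minorant problems, exploiting the identity that best $L^1$ approximation by trigonometric polynomials of degree $\le N$ is governed by the sign changes of the error. Concretely, I would use the classical fact (going back to Chebyshev/Markov and used by Vaaler in \cite{V}) that for a function $\psi$ and the $2N+2$ equally spaced points $x_j = (j + c)/(2N+2)$, a trigonometric polynomial $R$ of degree $\le N$ is a best $L^1(\R/\Z)$ approximation to $\psi$ if and only if $\psi - R$ changes sign at these points and nowhere else, in which case
\begin{equation*}
\int_{\R/\Z} |W(x) - \psi(x)|\, \dx \ge \left| \int_{\R/\Z} (W(x) - \psi(x))\, \sgn\!\Big(\sin\big((2N+2)\pi(x-c)\big)\Big)\, \dx \right|
\end{equation*}
for every $W$ of degree $\le N$, and the right-hand side does not depend on $W$ because $\sgn(\sin((2N+2)\pi(x-c)))$ has trigonometric Fourier coefficients supported outside $[-N,N]$ at the relevant frequencies (its expansion runs over odd multiples of $(2N+2)$). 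Thus the lower bound is forced to equal $\big|\int_{\R/\Z} \psi(x)\, \sgn(\sin((2N+2)\pi(x-c)))\, \dx\big|$, and equality holds exactly when $W-\psi$ has the correct sign pattern, which pins down $R_{n+1}(x;N)$ uniquely.

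The main computation is then to evaluate this integral for $\psi = \mathcal{B}_{n+1}$ and to choose the shift $c$ correctly. I would insert the Fourier expansion \eqref{Bernoulli periodic} of $\mathcal{B}_{n+1}$ and the Fourier expansion of the signum-of-sine function, and integrate term by term: only the frequencies $k$ that are odd multiples of $(N+1)$ survive, giving a series
\begin{equation*}
\int_{\R/\Z} \mathcal{B}_{n+1}(x)\, \sgn\!\Big(\sin\big((2N+2)\pi(x-c)\big)\Big)\, \dx
= \frac{c_{n+1}}{(N+1)^{n+1}} \sum_{v=0}^{\infty} \frac{(\pm)}{(2v+1)^{n+1}} e^{\pm i(2v+1)(2N+2)\pi c},
\end{equation*}
and by comparing with the Fourier expansions \eqref{Eulereven}--\eqref{Eulerodd} of the Euler periodic functions one recognizes this series, for the appropriate choice $c = \theta_n/(2N+2)$ (so that the phase becomes $0$ or $\pi/2$ according to the parity of $n$), as a multiple of $E_{n+1}(\theta_n)$. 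Tracking the constants $-n!/(2\pi i)^n$ against $(-1)^k 4(2k)!/\pi^{2k+1}$ (and its odd analogue) and the factor $(2N+2)^{n+1}$ yields precisely $|E_{n+1}(\theta_n)|/(2N+2)^{n+1}$. The choice of $\theta_n$ is exactly what makes the surviving series real and of constant sign, which is also what guarantees that the sign pattern of the optimal error is the prescribed alternating one at the nodes $x_j$.

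The remaining point is the existence of $R_{n+1}(x;N)$ realizing equality, together with its explicit form; here I would not construct it from scratch but rather assemble it from the extremal majorant and minorant of Theorem \ref{thm2.3}. Indeed, the standard device (used by Vaaler, and by Littmann in the real-line case \cite{Lit2}) is that the best $L^1$ approximation is the average $R_{n+1}(x;N) = \frac12\big(P_{n+1}(x;N',\alpha_n) + P_{n+1}(x;N',\beta_n)\big)$ built from majorant/minorant data on a finer grid (degree $N' $ chosen so that the interpolation nodes match the $2N+2$ sign-change points), after a suitable translation; one then checks that $\mathcal{B}_{n+1} - R_{n+1}$ interpolates zero exactly at the $x_j$ and changes sign there, so that the lower bound is attained. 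I expect the main obstacle to be bookkeeping: aligning the interpolation nodes of the one-sided extremals with the sign-change nodes $x_j$, getting the translation by $\theta_n/(2N+2)$ consistent across the even and odd cases, and verifying that the resulting $R_{n+1}$ genuinely has degree $\le N$ (not merely $\le 2N+1$) — this last fact is where the special structure of the Bernoulli/Euler pair, and the vanishing of the "wrong" Fourier coefficients, must be used carefully. Uniqueness then follows from the uniqueness clause in the characterization of best $L^1$ approximants once one knows the error has exactly $2N+2$ sign changes.
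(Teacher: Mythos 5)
The lower-bound half of your argument is essentially what the paper does: you pair a candidate $W$ against $\sgn\bigl(\sin(2N+2)\pi(x-c)\bigr)$ (or the cosine analogue), note that a degree-$\le N$ polynomial is orthogonal to this function because its Fourier support lives on odd multiples of $N+1$, and then compute $\int_{\R/\Z}\mathcal{B}_{n+1}\,\sgn(\cdots)\,\dx$ term by term using \eqref{Bernoulli periodic} and the Euler expansions \eqref{Eulereven}--\eqref{Eulerodd}. That matches the paper's uniqueness step and produces the stated constant.

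The construction of $R_{n+1}(x;N)$, however, has a genuine gap. You propose $R_{n+1}=\tfrac12\bigl(P_{n+1}(\cdot;N',\alpha_n)+P_{n+1}(\cdot;N',\beta_n)\bigr)$ and attribute this averaging device to Vaaler and to Littmann's real-line result, but that is not how the best $L^1$ approximant is built, and the device fails for even $n\ge 2$. The one-sided extremal of Theorem \ref{thm2.3} at parameter $\alpha$ comes from a real-line extremal that interpolates $\sgn(x)x^n$ with double zeros at $\alpha+\Z$; after periodization the error $\mathcal{B}_{n+1}-P_{n+1}(\cdot;N,\alpha)$ vanishes to second order on $\frac{\alpha+\Z}{N+1}$ and is one-signed, so it does not \emph{change} sign there. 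Averaging the majorant and minorant gives an error that is a sum of a nonnegative and a nonpositive periodization with double zeros on \emph{different} grids $\frac{\beta_n+\Z}{\delta}$ and $\frac{\alpha_n+\Z}{\delta}$. For even $n\ge 2$ those grids sit near $\{z_n,1-z_n\}+\Z$ (roughly $1/4$ and $3/4$), which is not a translate of the equally spaced set where an $L^1$-optimal error of degree $\le N$ must change sign, and no choice of $N'$ or shift aligns them with $\theta_n+\Z$. (Even for $n$ odd, the average of two degree-$(2N+1)$ polynomials has no reason to drop to degree $N$, as you yourself flag without resolving.) The paper avoids all of this by taking a genuinely different object as the starting point: Littmann's best $L^1(\R)$ approximant $G_n(\cdot;\theta_n)$ to the truncated power $x_+^n$, which interpolates at $\theta_n+\Z$ with \emph{simple} zeros and whose error has the sign pattern $\pm\sgn(\sin\pi\delta x)$ or $\pm\sgn(\cos\pi\delta x)$ by construction (equations \eqref{sin}--\eqref{cos}). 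Periodizing $\varphi_n=\,\delta^{-n}G_n(\delta\cdot;\theta_n)-x_+^n$ via Poisson summation and reading off the low Fourier modes yields a degree-$\le N$ polynomial $R_{n+1}$ whose error literally \emph{is} $(n+1)$ times the periodization of $\varphi_n$, so both the sign changes at the correct nodes and the degree bound are automatic, and the $L^1$ norm is computed by Littmann's Lemma \ref{lem3.1}. To make your proposal rigorous you would need to replace the averaging step by this construction, or else prove independently that a degree-$\le N$ polynomial exists whose error equioscillates at the $2N+2$ points $\tfrac{j+\theta_n}{2N+2}$.
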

The extremal trigonometric polynomial $R_{n+1}(x;N)$ is explicitly described in Section 3 (equations (\ref{Sec3.1}) - (\ref{Sec3.1.2})).\\

Since the Bernoulli polynomial $B_n(x)$ is monic of degree $n$, by simple linear algebra, we can use the approximations obtained in Theorems \ref{thm2.3} and \ref{thm3.4} to majorize, minorize and approximate in $L^1(\R/\Z)$ any polynomial periodic function
\begin{equation*}
f(x) = a_nx^n + ... + a_1x + a_0 \ \ \ \textrm{for} \ \ \ x\in [0,1)
\end{equation*}
In general, these approximations will not be extremal. An interesting case, for example, arises from the Bernoulli inversion formula
\begin{equation}\label{xn}
 x^n = \dfrac{1}{(n+1)}\sum_{k=0}^{n}\binom{n+1}{k}B_k(x)
\end{equation}
Substituting $B_k(x)$ in expression (\ref{xn}) by $P_k(x;N,\beta_{k-1}), P_k(x;N, \alpha_{k-1}), R_k(x;N)$ according to Theorems \ref{thm2.3} and \ref{thm3.4} we obtain, respectively, trigonometric polynomials of degree $N$ that minorize, majorize and approximate in $L^1(\R/\Z)$ the periodic function $f(x) = x^n$, $x\in [0,1)$ (we adopt above $B_0 = P_0 = R_0 = 1$).\\

We proceed now to the proofs of Theorems \ref{thm2.3} and \ref{thm3.4}. In this paper we identify functions defined on $\R$ having period $1$ with functions defined on the compact quotient group $\R/\Z$. Integrals over $\R/\Z$ are with respect to the Haar measure normalized so that $\R/\Z$ has measure $1$. We write $e(x) = e^{2\pi i x}$. The signum symmetric function $\sgn(x)$ is given by $\sgn(x) = 1$, if $x>0$, $\sgn(x) = -1$, if $x<0$ and $\sgn(0) = 0$. We denote $\sgn_+(x)$ as the right-continuous signum function (i.e. $\sgn_+(0) = 1$).


\section{Proof of Theorem \ref{thm2.3}}
We start this section recalling the notation and results from \cite{Lit} that will be used here. For $0\leq \alpha \leq 1$, $z \in \C$ and $n \in \N_0 = \N \cup \{0\}$, we define the following entire functions of exponential type $2\pi$:
\begin{eqnarray}\label{candidates}
H_n(z;\alpha) = \left(\tfrac{\sin \pi(z-\alpha)}{\pi}\right)^2 \left\{ z^n \sum_{k=-\infty}^{\infty} \tfrac{\sgn_+(k)}{(z-k-\alpha)^2} + 2\sum_{k=1}^{n} B_{k-1}(\alpha) z^{n-k} + \tfrac{2B_n(\alpha)}{z-\{\alpha\}}\right\} &&
\end{eqnarray}
where $\{ \alpha \}$ denotes the fractional part of $\alpha$ and $B_n(x)$ is the $n$-th Bernoulli polynomial defined in (\ref{Bernoulli polynomials}). Recall the sequences $\{\alpha_n\}_{n \in \N_0}$ and $\{\beta_n\}_{n \in \N_0}$ defined in (\ref{Intro3}).
\begin{lemma}[cf. Theorem 1 of \cite{Lit}]
Let $n \in \N_0$. The inequality
\begin{equation}\label{Sec2.2}
\delta^{-n}H_n(\delta x; \alpha_n) \leq \sgn (x) x^n \leq \delta^{-n}H_n(\delta x; \beta_n)
\end{equation}
holds for all $x \in \R$. These are the unique extremals of exponential type $2\pi\delta$ and they satisfy
\begin{eqnarray}
\int_\R(\delta^{-n}H_n(\delta x; \beta_n) - \sgn(x) x^n )\, \dx &=& -\frac{2 B_{n+1}(\beta_n)}{(n+1)\delta^{n+1}} \label{int1}\\
\int_\R( \sgn(x) x^n - \delta^{-n}H_n(\delta x; \alpha_n) )\, \dx &=& \frac{2B_{n+1}(\alpha_n)}{(n+1)\delta^{n+1}}.\label{int2}
\end{eqnarray}
\end{lemma}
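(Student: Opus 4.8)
The plan is to reduce to the normalization $\delta=1$ and then recover the statement, which for $\delta=1$ is exactly (a rescaled form of) Theorem 1 of \cite{Lit}; for completeness I will sketch the ingredients of that argument.

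\emph{Scaling.} First I would observe that the map $F\mapsto\big(x\mapsto\delta^{-n}F(\delta x)\big)$ carries $E(2\pi)$ onto $E(2\pi\delta)$, and that, since $\sgn(\delta x)=\sgn(x)$ and $(\delta x)^n=\delta^nx^n$ for $\delta>0$, it preserves both the majorant and the minorant relations for $\sgn(x)x^n$; moreover $\int_\R\big(\delta^{-n}F(\delta x)-\sgn(x)x^n\big)\,\dx=\delta^{-n-1}\int_\R\big(F(y)-\sgn(y)y^n\big)\,\dy$, which produces the factors $\delta^{-n-1}$ in (\ref{int1})--(\ref{int2}). So it is enough to treat $\delta=1$. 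That $H_n(\cdot;\alpha)$ is entire of exponential type $2\pi$ is seen by noting that the bracket in (\ref{candidates}) has at worst double poles on $\alpha+\Z$ (from $z^n\sgn_+(k)(z-k-\alpha)^{-2}$) and a simple pole at $z=\{\alpha\}$ (from $2B_n(\alpha)/(z-\{\alpha\})$), all of which are cancelled by the double zeros of $(\sin\pi(z-\alpha)/\pi)^2$, while the estimate $|\sin\pi(z-\alpha)|^2\ll e^{2\pi|\Im z|}$ against the polynomial growth of the bracket off $\alpha+\Z$ yields type $2\pi$.

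\emph{Interpolation and sign.} The crux is to show that $H_n(\cdot;\alpha)$ Hermite-interpolates $\sgn(x)x^n$ at the nodes $\alpha+\Z$, i.e. $H_n(k+\alpha;\alpha)=\sgn(k+\alpha)(k+\alpha)^n$ and $H_n'(k+\alpha;\alpha)=\big(\sgn(x)x^n\big)'|_{x=k+\alpha}$ for all $k\in\Z$. Using $\pi^2/\sin^2\pi(z-\alpha)=\sum_{k\in\Z}(z-k-\alpha)^{-2}$ one rewrites, on the component $(0,\infty)$ (and symmetrically on $(-\infty,0)$),
\[
H_n(x;\alpha)-\sgn(x)x^n=2\Big(\tfrac{\sin\pi(x-\alpha)}{\pi}\Big)^2\Big[-x^n\sum_{j\ge1}\frac{1}{(x+j-\alpha)^2}+\sum_{k=1}^{n}B_{k-1}(\alpha)x^{n-k}+\frac{B_n(\alpha)}{x-\{\alpha\}}\Big],
\]
and then an Euler--Maclaurin expansion of the trigamma-type sum, organized by the Bernoulli recursion $\sum_{k=0}^{n}\binom{n+1}{k}B_k(\alpha)=(n+1)\alpha^n$, shows that the coefficients $2B_{k-1}(\alpha)$ and the term $2B_n(\alpha)/(z-\{\alpha\})$ are chosen precisely so that the bracket vanishes to second order at every node, hence the difference has a zero of order $\ge2$ on all of $\alpha+\Z$. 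The sign of the difference on each half-line is then read off from the same expansion, and here the definitions (\ref{Intro3}) do their job: $\alpha_n$ (resp. $\beta_n$) is the point of $[0,1]$ at which $B_{n+1}$ attains its maximum (resp. its minimum), which forces $H_n(x;\alpha_n)\le\sgn(x)x^n\le H_n(x;\beta_n)$ for every real $x$. I expect this step — the exact second-order matching together with the sign bookkeeping — to be the main obstacle, and it is precisely what \cite{Lit} carries out.

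\emph{Integral value, minimality, uniqueness.} For $F\in E(2\pi)\cap L^1(\R)$ one has the quadrature identity $\int_\R F=\sum_{k\in\Z}F(k+\alpha)$ (Poisson summation: $\widehat F$ is continuous with support in $[-1,1]$, so $\widehat F(\pm1)=0$ and only the $m=0$ term survives). Applying this on the entire side, after subtracting the interpolation data and telescoping with the Bernoulli recursion, evaluates $\int_\R\big(H_n(x;\beta_n)-\sgn(x)x^n\big)\,\dx$ to $-2B_{n+1}(\beta_n)/(n+1)$ and likewise gives (\ref{int2}) (in particular the difference is integrable). Finally, if $G\in E(2\pi)$ majorizes $\sgn(x)x^n$ with integrable difference, then $R:=G-H_n(\cdot;\beta_n)\in E(2\pi)\cap L^1(\R)$ satisfies $R(k+\beta_n)=G(k+\beta_n)-\sgn(k+\beta_n)(k+\beta_n)^n\ge0$, whence $\int_\R R=\sum_kR(k+\beta_n)\ge0$; this is inequality (\ref{int1}) for $G$, and if equality holds then the nonnegative function $R$ must vanish at every node together with its derivative, so $R=\sin^2\pi(z-\beta_n)\,p(z)$ with $p$ entire of exponential type $0$ and bounded on $\R$, forcing $p$ constant and then $R\equiv0$ by integrability. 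The minorant case is identical with $\alpha_n$ in place of $\beta_n$.
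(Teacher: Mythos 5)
You should first note that the paper contains no proof of this lemma at all: it is quoted, after the dilation $x\mapsto\delta x$, from Theorem 1 of \cite{Lit} (together with the surrounding lemmas there), so the only argument to compare with is Littmann's, and your outline follows essentially that route: reduce to $\delta=1$ by scaling, check that $H_n(\cdot;\alpha)$ is entire of exponential type $2\pi$ and interpolates $\sgn(x)x^n$ on $\alpha+\Z$, obtain the one-sided inequalities from the sign of the bracket, and get optimality and uniqueness from the quadrature formula $\int_\R F=\sum_{k}F(k+\alpha)$ for $F\in E(2\pi)\cap L^1(\R)$. The scaling computation, the pole cancellation, and the identity you write for $H_n(x;\alpha)-x^n$ on $(0,\infty)$ are correct, and, like the paper, you delegate the genuinely hard step (the sign analysis tied to the choice of $\alpha_n,\beta_n$) to \cite{Lit}, which is fair for a quoted lemma.

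Three local points would need repair if this were to stand as a proof rather than a pointer to \cite{Lit}. First, the difference does not vanish to order $\ge2$ at \emph{every} node: at the node $\{\alpha\}$ the term $2B_n(\alpha)/(z-\{\alpha\})$ leaves only a simple zero whenever $B_n(\alpha)\neq0$; for the parameters $\alpha_n,\beta_n$ this happens exactly in the low cases $n=0,1$, where $\{\alpha\}=0$ is the point at which $\sgn(x)x^n$ fails to be differentiable, so it is harmless, but the blanket claim is false as stated and this exceptional node must be treated separately in the uniqueness step. Second, the values (\ref{int1})--(\ref{int2}) cannot be obtained by applying the quadrature identity ``on the entire side'', since $H_n$ itself is not integrable; what is actually needed is the computation of the Fourier transform of the integrable difference $d_n$ (the content of Lemma \ref{lem2.2}, quoted from \cite{Lit}), or equivalently Poisson summation applied to $d_n$ at $x=\beta_n$ combined with (\ref{Sec2.5}) and (\ref{Bernoulli periodic}); the phrase ``telescoping with the Bernoulli recursion'' does not yet amount to that computation. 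Third, in the uniqueness argument $R=G-H_n(\cdot;\beta_n)$ is \emph{not} a nonnegative function (it is nonnegative only at the nodes); the vanishing of $R'$ at the nodes follows instead from the tangency of both majorants with $\sgn(x)x^n$ at those smooth touching points, and the step ``$p$ of exponential type $0$ and bounded on $\R$'' requires the usual Phragm\'en--Lindel\"of argument near the zeros of $\sin^2\pi(z-\beta_n)$ rather than a bare assertion. With these repairs your sketch coincides with the argument of \cite{Lit} that the paper cites.
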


\begin{lemma}[cf. Lemmas 1 and 2 of \cite{Lit}]\label{lem2.2}
Define the functions
\begin{equation}\label{Sec2.3}
d_n(x; \delta, \alpha) = \delta^{-n}H_n(\delta x; \alpha) - \sgn(x) x^n
\end{equation}
For any $\delta >0$ and $0 \leq \alpha \leq 1$ the function $x \mapsto d_n(x; \delta, \alpha)$ is $O(x^{-2})$ as $|x| \to \infty$ and therefore it is integrable. Its Fourier transform is the continuous function given by
\begin{eqnarray}
\widehat{d}_n(t; \delta, \alpha) &=& -2\delta^{-n-1}\sum_{k=0}^{\infty} \tfrac{B_{k+n+1}(\alpha)}{(k+1)!}\left( \tfrac{k+1}{k+n+1} - \tfrac{|t|}{\delta} \right) (-2\pi i (\tfrac{t}{\delta}))^k \nonumber\\
& &\ \  + \delta^{-n-1}\tfrac{B_n(\alpha)}{\pi i} \sgn(t) (e(-\{\alpha\}\tfrac{t}{\delta}) - 1) \ \ \textrm{if} \ \ |t|<\delta \label{Sec2.4}\\
\nonumber\\
\widehat{d}_n(t; \delta, \alpha) &=& -\frac{2.n!}{(2\pi i t)^{n+1}} \ \ \textrm{if} \ \ |t| \geq \delta. \label{Sec2.5}
\end{eqnarray}
 \end{lemma}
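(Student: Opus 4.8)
The plan is to reduce everything to the normalized case $\delta=1$. Since $\sgn(x)x^n=\delta^{-n}\sgn(\delta x)(\delta x)^n$ for $\delta>0$, definition \eqref{Sec2.3} gives $d_n(x;\delta,\alpha)=\delta^{-n}d_n(\delta x;1,\alpha)$, and the dilation rule for the Fourier transform then yields $\widehat d_n(t;\delta,\alpha)=\delta^{-n-1}\widehat d_n(t/\delta;1,\alpha)$ — precisely the $\delta$-dependence displayed in \eqref{Sec2.4}--\eqref{Sec2.5}. So it suffices to treat $\delta=1$; below write $H_n(x)=H_n(x;\alpha)$ and $d_n(x)=d_n(x;1,\alpha)$.

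Using the identity $\bigl(\tfrac{\sin\pi(x-\alpha)}{\pi}\bigr)^2\sum_{k\in\Z}(x-k-\alpha)^{-2}=1$ to absorb $\sgn(x)x^n$, one obtains for $x>0$
\[
d_n(x)=2\Bigl(\tfrac{\sin\pi(x-\alpha)}{\pi}\Bigr)^2\Bigl\{\,\sum_{k=1}^{n}B_{k-1}(\alpha)\,x^{n-k}+\frac{B_n(\alpha)}{x-\{\alpha\}}-x^n\sum_{j\ge1}\frac{1}{(x+j-\alpha)^2}\,\Bigr\},
\]
and a mirror formula for $x<0$. As $\bigl(\tfrac{\sin\pi(x-\alpha)}{\pi}\bigr)^2$ is bounded on $\R$, the claim $d_n(x)=O(x^{-2})$ amounts to the brace being $O(x^{-2})$, i.e.\ that the first $n+1$ terms of the asymptotic expansion of $x^n\sum_{j\ge1}(x+j-\alpha)^{-2}=x^n\zeta(2,x+1-\alpha)$ are $\sum_{k=1}^{n}B_{k-1}(\alpha)x^{n-k}+B_n(\alpha)x^{-1}$ — a routine Euler--Maclaurin (Hurwitz-zeta) computation, which together with the $x<0$ case is Lemma~1 of \cite{Lit}. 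Integrability of $d_n$ is then immediate, hence $\widehat d_n$ is continuous on $\R$. For $|t|\ge1$ I invoke Paley--Wiener--Schwartz: $H_n$ is entire of exponential type $2\pi$ and, by the same estimates, polynomially bounded on $\R$ (the squared sine cancels the pole of the $(x-\{\alpha\})^{-1}$ term and tames the partial-fraction sum), so $\supp\widehat H_n\subseteq[-1,1]$ and $\widehat d_n(t)=-\widehat{\sgn(\cdot)(\cdot)^n}(t)$ there; since $\widehat{\sgn}(t)=(\pi i t)^{-1}$ (principal value), the rule $\widehat{x^n f}=\bigl(\tfrac{-1}{2\pi i}\bigr)^n(\widehat f\,)^{(n)}$ gives $\widehat{\sgn(x)x^n}(t)=2\,n!\,(2\pi i t)^{-n-1}$ off the origin, which is \eqref{Sec2.5}, and continuity fixes the value at $|t|=1$.

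The remaining and main step is the evaluation of $\widehat d_n$ on $(-1,1)$, which I would carry out by Fourier-transforming $H_n$ term by term in \eqref{candidates} as tempered distributions. With $K(y)=\bigl(\tfrac{\sin\pi y}{\pi y}\bigr)^2$, so $\widehat K(t)=(1-|t|)_+$, the first term is $x^n\sum_k\sgn_+(k)K(x-k-\alpha)$, whose transform is $(1-|t|)_+\,e(-\alpha t)\bigl(\sum_k\sgn_+(k)e(-kt)\bigr)$ followed by the operator $\bigl(\tfrac{-1}{2\pi i}\bigr)^n(d/dt)^n$, where the Abel-summation identity $\sum_k\sgn_+(k)e(-kt)=1-i\cot\pi t$ (a $1$-periodic principal-value distribution) feeds in the $\sgn(t)$ and the analytic contributions. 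The polynomial term $2\sum_{k=1}^{n}B_{k-1}(\alpha)x^{n-k}$ times $\bigl(\tfrac{\sin\pi(x-\alpha)}{\pi}\bigr)^2=\tfrac1{2\pi^2}(1-\cos2\pi(x-\alpha))$ has transform supported in $\{0,\pm1\}$, so on $(-1,1)$ it contributes only a combination of $\delta_0^{(j)}$; the rational term $2B_n(\alpha)(x-\{\alpha\})^{-1}$, via $\widehat{(x-a)^{-1}}(t)=-\pi i\,\sgn(t)e(-at)$ convolved with $\widehat{(\sin\pi(x-\alpha)/\pi)^2}$, contributes $\tfrac{B_n(\alpha)}{\pi i}\sgn(t)e(-\{\alpha\}t)$ on $(-1,1)$ (the two shifted copies at $t=\pm1$ cancel there). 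Subtracting $\widehat{\sgn(x)x^n}(t)=2n!\,(2\pi i t)^{-n-1}$ (with the appropriate Hadamard finite part at $0$) and collecting, the $\cot\pi t$ and $\sgn(t)$ contributions assemble into $\tfrac{B_n(\alpha)}{\pi i}\sgn(t)\bigl(e(-\{\alpha\}t)-1\bigr)$, while the remaining smooth part, written as a Taylor series at $t=0$, is recognized — through the Bernoulli generating function $\tfrac{z e^{\alpha z}}{e^z-1}=\sum_{m\ge0}B_m(\alpha)z^m/m!$, with the weights $\tfrac{k+1}{k+n+1}$ coming from an integration against $w^n$ — as $-2\sum_{k\ge0}\tfrac{B_{k+n+1}(\alpha)}{(k+1)!}\bigl(\tfrac{k+1}{k+n+1}-|t|\bigr)(-2\pi i t)^k$, i.e.\ \eqref{Sec2.4}.

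The main obstacle is exactly this last distributional bookkeeping: tracking the $\delta_0^{(j)}$ terms generated when the $n$ derivatives land on the $|t|$-kink and on the $\cot\pi t$ and $\sgn(t)$ singularities at $t=0$, and checking that, together with the finite-part singularity of $\widehat{\sgn(x)x^n}$, these cancel so that the result is the stated \emph{continuous} function — and then matching the Taylor coefficients with the Bernoulli series, which is where the precise choices of the coefficients $2B_{k-1}(\alpha)$ and of the summand $2B_n(\alpha)/(x-\{\alpha\})$ in \eqref{candidates} are used. A route that avoids the distribution calculus is to compute $\widehat d_n(t)=\int_\R d_n(x)e(-xt)\,dx$ directly from the closed form of $d_n$ above, exchanging the sum over $j$ with the integral and using the standard evaluations of $\int_\R K(x-a)e(-xt)\,dx$ and its polynomially weighted variants — trading the distributional care for a longer but elementary integral computation.
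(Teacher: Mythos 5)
The paper itself offers no proof of this lemma: it is imported wholesale from Littmann (Lemmas 1 and 2 of \cite{Lit}), so the only question is whether your sketch amounts to a proof on its own. The routine parts are fine: the dilation reduction $d_n(x;\delta,\alpha)=\delta^{-n}d_n(\delta x;1,\alpha)$ and hence $\widehat d_n(t;\delta,\alpha)=\delta^{-n-1}\widehat d_n(t/\delta;1,\alpha)$; the closed form of $d_n$ for $x>0$ obtained from $\bigl(\tfrac{\sin\pi(x-\alpha)}{\pi}\bigr)^2\sum_{k\in\Z}(x-k-\alpha)^{-2}=1$; the $O(x^{-2})$ decay via the expansion $\sum_{j\ge1}(x+j-\alpha)^{-2}\sim\sum_{m\ge0}B_m(\alpha)x^{-m-1}$, which indeed makes the choice of the coefficients in \eqref{candidates} cancel everything down to order $x^{-2}$; and the deduction of \eqref{Sec2.5} from integrability of $d_n$, Paley--Wiener--Schwartz applied to the polynomially bounded entire function $H_n$ of exponential type $2\pi$, and $\widehat{\sgn(\cdot)(\cdot)^n}(t)=2\,n!\,(2\pi i t)^{-n-1}$ away from $t=0$, with continuity fixing $|t|=\delta$.

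The genuine gap is the heart of the lemma, formula \eqref{Sec2.4}. Your plan stops exactly where the work begins: after $n$ distributional differentiations you are left with a pile of singular objects concentrated at $t=0$ (derivatives of $\delta_0$ from the polynomial terms times the Fej\'er-type factor, the pole of $\cot\pi t$, the kink of $(1-|t|)_+$, the finite part of $2\,n!\,(2\pi i t)^{-n-1}$), and the statement that these cancel and that the smooth remainder has exactly the coefficients $-2\tfrac{B_{k+n+1}(\alpha)}{(k+1)!}\bigl(\tfrac{k+1}{k+n+1}-|t|\bigr)(-2\pi i t)^k$ \emph{is} the content of \eqref{Sec2.4}; you assert it rather than verify it, and the remark that the weights $\tfrac{k+1}{k+n+1}$ come ``from an integration against $w^n$'' is not a derivation. (The one contribution you do compute, $\tfrac{B_n(\alpha)}{\pi i}\sgn(t)e(-\{\alpha\}t)$ from the term $2B_n(\alpha)/(x-\{\alpha\})$, is correct, but it is the easiest piece.) The alternative route you mention, computing $\int_{\R}d_n(x)e(-xt)\,\dx$ directly from the closed form of $d_n$, is likewise only named, not carried out. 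As it stands, your proposal proves the decay statement and \eqref{Sec2.5} but not \eqref{Sec2.4}; that identity still has to be computed in detail (or, as the paper does, simply cited from \cite{Lit}).
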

Let $N$ be a nonnegative integer. To describe the extremal trigonometric polynomials of degree at most $N$ that majorize and minorize the Bernoulli periodic function $\mathcal{B}_{n+1}(x)$ it will be convenient to use $\delta = N+1$. For $0\leq \alpha \leq 1$, we define the following family of trigonometric polynomials
\begin{equation}\label{Sec2.5.1}
P_{n+1}(x;N,\alpha) = \sum_{k = -N}^{N} \widehat{P}_{n+1}(k; N, \alpha)\, e(kx)
 \end{equation}
where the Fourier coefficients are given by
\begin{equation}\label{Sec2.6}
\widehat{P}_{n+1}(0; N,\alpha) = \frac{B_{n+1}(\alpha)}{(N+1)^{n+1}}
\end{equation}
and
\begin{equation}\label{Sec2.7}
\widehat{P}_{n+1}(k;N,\alpha) = -\left(\frac{n+1}{2}\right) \left( \widehat{d}_n(k; (N+1), \alpha) + \frac{2.n!}{(2\pi i k)^{n+1}}\right)
\end{equation}
for $k \neq 0$. We are now in position to prove Theorem \ref{thm2.3}.

\begin{proof}[Proof of Theorem \ref{thm2.3}]
The case $n=0$ was treated by Vaaler in \cite{V}, so we focus in the case $n\geq 1$. Recall that we are using here $\delta = N+1$. Observe initially that the Poisson summation formula
\begin{equation}\label{Sec2.9}
\sum_{l=-\infty}^{\infty} d_n(x+l; \delta, \beta_n) = \sum_{l=-\infty}^{\infty} \widehat{d}_n(l;\delta, \beta_n)e(lx)
\end{equation}
holds for all $x \in \R/\Z$. The reason for this is simple, from Lemma \ref{lem2.2} the function $x \mapsto d_n(x; \delta, \beta_n)$ is $O(x^{-2})$ as $|x| \to \infty$, therefore the left hand side of (\ref{Sec2.9}) is a continuous function. From (\ref{Sec2.5}) the Fourier series on the right hand side of (\ref{Sec2.9}) is absolutely convergent, and this suffices to establish the Poisson summation.

Using (\ref{Bernoulli periodic}), (\ref{int1}), (\ref{Sec2.5}), (\ref{Sec2.6}) and (\ref{Sec2.7}) we find that 
\begin{align}\label{Sec2.11}
\begin{split}
\tfrac{2}{(n+1)}\left(\mathcal{B}_{n+1}(x) - P_{n+1}(x;N,\beta_n) \right) & =  \sum_{l=-\infty}^{\infty} \widehat{d}_n(l;\delta, \beta_n)e(lx)\\
& = \sum_{l=-\infty}^{\infty} d_n(x+l; \delta, \beta_n) \geq 0
\end{split}
\end{align}
where the last inequality comes from (\ref{Sec2.2}) and (\ref{Sec2.3}). This proves that 
\begin{equation}\label{2.12}
 \mathcal{B}_{n+1}(x) \geq P_{n+1}(x;N,\beta_n) 
\end{equation}
for all $x \in \R/\Z$. To prove uniqueness recall from (\ref{candidates}) that $H(x,\beta_n)$ interpolates $\sgn(x)x^n$ at the points $\beta_n + m$, $m \in \Z$. From this we find that
\begin{equation}\label{Sec2.13}
d_n(x; \delta,\beta_n) = 0 \ \ \ \ \textrm{if}  \ \ \ \  x = \tfrac{\beta_n + m}{\delta},\ \ m \in \Z 
\end{equation}
From (\ref{Sec2.11}) and (\ref{Sec2.13}) we have the equalities
\begin{equation}\label{Sec2.14}
\mathcal{B}_{n+1}\left(\tfrac{\beta_n + m}{\delta}\right) = P_{n+1}\left(\left(\tfrac{\beta_n + m}{\delta}\right);N,\beta_n\right), \ \ m = 0,1,2,...,N.
\end{equation}
Suppose now that $Q(x)$ is a trigonometric polynomial of degree at most $N$ such that $\mathcal{B}_{n+1}(x) \geq Q(x)$ for all $x \in \R/\Z$. Using (\ref{Sec2.14}) we have
\begin{align}\label{Sec2.15}
 \begin{split}
\int_{\R/\Z} Q(x)\dx &= \tfrac{1}{\delta} \sum_{m=0}^{N} Q\left(\tfrac{\beta_n + m}{\delta}\right) \leq \tfrac{1}{\delta}                           \sum_{k=0}^{N} \mathcal{B}_{n+1}\left(\tfrac{\beta_n + m}{\delta}\right)  \\
                     & = \tfrac{1}{\delta} \sum_{k=0}^{N} P_{n+1}\left(\left(\tfrac{\beta_n + m}{\delta}\right);N,\beta_n\right)  =\int_{\R/\Z} P_{n+1}(x; N, \beta_n) \dx
\end{split}
\end{align}
which proves (\ref{Sec2.8.1}). If we have equality in (\ref{Sec2.15}) this means that for $m=0,1,2,...,N$ we have
\begin{equation}\label{Sec2.16}
Q\left(\tfrac{\beta_n + m}{\delta}\right) = \mathcal{B}_{n+1}\left(\tfrac{\beta_n + m}{\delta}\right) = P_{n+1}\left(\left(\tfrac{\beta_n + m}{\delta}\right);N,\beta_n\right)
\end{equation}
As $\mathcal{B}_{n+1}(x)$ is continuously differentiable at $\R/\Z - \{0\}$, equalities (\ref{Sec2.16}) imply that for at least $N$ values of $m = 0,1,2,...,N$ we have
\begin{equation}\label{Sec2.17}
Q'\left(\tfrac{\beta_n + m}{\delta}\right) = \mathcal{B}'_{n+1}\left(\tfrac{\beta_n + m}{\delta}\right) = P'_{n+1}\left(\left(\tfrac{\beta_n + m}{\delta}\right);N,\beta_n\right)
\end{equation}
The $2N+1$ conditions in (\ref{Sec2.16}) and (\ref{Sec2.17}) are sufficient to conclude that $Q(x) = P_{n+1}(x;N,\beta_n)$ (see \cite[Vol. II, page 23]{Z}). The proof for the majorizing case is very similar.
\end{proof}

\section{Proof of Theorem \ref{thm3.4}}

Here we start by recalling the corresponding extremal problem in the real line, solved by F. Littmann in \cite{Lit2}. In this paper he finds the best $L^1(\R)$-approximation to the function $f(x) = x_+^n$ ($f(x) = x^n$ for $x \geq 0$ and $f(x) = 0 $ for $x<0$) by an entire function of exponential type $\pi\delta$. 

The following facts come from section 6 of \cite{Lit2}. Let $\phi = \Gamma'/\Gamma$, where $\Gamma$ is the Euler Gamma function, and $\alpha \in [0,1]$. Define the following functions of exponential type $\pi$:
\begin{equation*}
G_n(z;\alpha) = \left(\tfrac{\sin \pi(z - \alpha)}{\pi}\right) \, z^n \left( \phi\left(\tfrac{\alpha - z}{2}\right) - \phi(\alpha - z) + \log 2 - \dfrac{1}{2}\sum_{k=0}^{n} E_k(\alpha)z^{-k-1}\right)
\end{equation*}
where $E_k(x)$ are the Euler polynomials defined in (\ref{EP}). Also, recall the sequence $(\theta_n)_{n \in \N_0}$  defined in (\ref{Intro4}).

\begin{lemma}[cf. Theorem 6.2 of \cite{Lit2}]\label{lem3.1} Let $n \in \N_0$. For any entire function $A(z)$ of exponential type $\pi\delta$, the inequality
\begin{equation}\nonumber
 \int_{-\infty}^{\infty} |A(x) - x_+^n| \, \dx \geq \dfrac{|E_{n+1}(\theta_n)|}{(n+1)\delta^{n+1}}
\end{equation}
holds, with equality if and only if $A(z) = \delta^{-n}G_n(\delta z; \theta_n)$.
\end{lemma}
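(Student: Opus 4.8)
The plan is to prove Lemma~\ref{lem3.1} by the classical sign-change (duality) characterisation of best $L^1$ approximation by bandlimited functions. The decisive elementary observation is that the square wave $s_\gamma(x):=\sgn\bigl(\sin(\pi\delta x+\gamma)\bigr)$ annihilates the admissible class: if $B$ is an entire function of exponential type at most $\pi\delta$ with $B\in L^1(\R)$, then $\int_\R B(x)s_\gamma(x)\,\dx=0$. Indeed $\widehat B$ is then continuous with support in $[-\delta/2,\delta/2]$, hence $\widehat B$ vanishes on $\{|t|\ge\delta/2\}$, whereas the $(2/\delta)$-periodic function $s_\gamma$ has a Fourier series involving only the frequencies $\pm(2j+1)\delta/2$ with $j\ge0$; pairing $B$ against each (uniformly bounded, a.e.-convergent) partial sum of that series yields a finite sum of values of $\widehat B$ at those frequencies, namely $0$, and dominated convergence finishes it. Consequently, once one produces an entire function $A^\ast$ of exponential type $\pi\delta$ with $A^\ast-x_+^n\in L^1(\R)$ and $\sgn\bigl(A^\ast(x)-x_+^n\bigr)=s_\gamma(x)$ a.e.\ for a suitable $\gamma$, then for every competitor $A$ of exponential type $\pi\delta$ with $A-x_+^n\in L^1$ one has
\begin{align*}
\int_\R|A(x)-x_+^n|\,\dx\ &\ge\ \int_\R\bigl(A(x)-x_+^n\bigr)s_\gamma(x)\,\dx\\
&=\ \int_\R\bigl(A^\ast(x)-x_+^n\bigr)s_\gamma(x)\,\dx\ =\ \int_\R|A^\ast(x)-x_+^n|\,\dx ,
\end{align*}
the middle equality because $A-A^\ast$ is of exponential type $\le\pi\delta$ and lies in $L^1$. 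Thus the problem collapses to exhibiting $A^\ast$ and evaluating $\int_\R(A^\ast-x_+^n)s_\gamma$.

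I would take $A^\ast(z)=\delta^{-n}G_n(\delta z;\theta_n)$ and verify four points. (1) $A^\ast$ is entire of exponential type $\pi\delta$: the meromorphic bracket in $G_n$ has only simple poles, at $z=\alpha+m$ with $m\ge0$, all cancelled by the zeros of $\tfrac{\sin\pi(z-\alpha)}{\pi}$, and it is polynomially bounded, so the product is entire of exponential type $\pi$. (2) $A^\ast-x_+^n\in L^1(\R)$: the subtracted tail $-\tfrac12\sum_{k=0}^nE_k(\alpha)z^{-k-1}$ is precisely the order-$n$ asymptotic expansion (coming from the digamma duplication formula) of $\phi\bigl(\tfrac{\alpha-z}{2}\bigr)-\phi(\alpha-z)+\log 2$, which forces $G_n(z;\alpha)-z_+^n=O(|x|^{-2})$ on $\R$. (3) $A^\ast$ interpolates $x_+^n$ on the lattice $\tfrac1\delta(\theta_n+\Z)$: the partial-fraction expansion of $\phi\bigl(\tfrac{\alpha-z}{2}\bigr)-\phi(\alpha-z)$ has simple poles at $z=\alpha+m$ ($m\ge0$) with residues $(-1)^m$, which combine with the zeros of $\sin\pi(z-\alpha)$ to give $G_n(\alpha+m;\alpha)=(\alpha+m)_+^n$, and scaling gives $A^\ast\bigl(\tfrac{\theta_n+m}{\delta}\bigr)=\bigl(\tfrac{\theta_n+m}{\delta}\bigr)_+^n$ for all $m\in\Z$. (4) --- the crucial point --- $A^\ast(x)-x_+^n$ vanishes at precisely these lattice points and changes sign there, equivalently it has constant sign, alternating with $m$, on each interval $\bigl(\tfrac{\theta_n+m}{\delta},\tfrac{\theta_n+m+1}{\delta}\bigr)$, so that $\sgn\bigl(A^\ast(x)-x_+^n\bigr)=s_\gamma(x)$ for the right $\gamma$; I would get this from a representation of $A^\ast-x_+^n$ as $\sin\pi(\delta x-\theta_n)$ times a function of constant sign, as in \cite{Lit2}, and it is here that the choice $\alpha=\theta_n$ (rather than a generic $\alpha$, which instead produces one-sided approximants) is forced. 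Granting (1)--(4), the minimum equals $\int_\R\bigl(A^\ast(x)-x_+^n\bigr)s_\gamma(x)\,\dx$; expanding $s_\gamma$ in its Fourier series and using $\widehat{A^\ast-x_+^n}(t)=-\tfrac{n!}{(2\pi i t)^{n+1}}$ for $|t|\ge\delta/2$ (since $A^\ast$ has band-limited distributional transform while $\widehat{x_+^n}$ equals $\tfrac{n!}{(2\pi i t)^{n+1}}$ off the origin), this pairing collapses to a convergent numerical series which, compared with the Fourier expansions (\ref{Eulereven})--(\ref{Eulerodd}) of the Euler periodic functions at $x=\theta_n$, equals $\dfrac{|E_{n+1}(\theta_n)|}{(n+1)\delta^{n+1}}$.

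For uniqueness, suppose $A$ attains the minimum. Equality throughout the displayed chain forces $\int_\R\bigl(|A-x_+^n|-(A-x_+^n)s_\gamma\bigr)\,\dx=0$ with a pointwise-nonnegative integrand, hence $\sgn\bigl(A(x)-x_+^n\bigr)=s_\gamma(x)$ wherever $A(x)\neq x_+^n$. For $n\ge1$ the map $x\mapsto A(x)-x_+^n$ is continuous, so it must vanish at each point where its forced sign $s_\gamma$ jumps, i.e.\ at every $\tfrac{\theta_n+m}{\delta}$, $m\in\Z$. Then $B:=A-A^\ast$ is an $L^1$ entire function of exponential type $\le\pi\delta$ vanishing on the whole lattice $\tfrac1\delta(\theta_n+\Z)$; writing $B\bigl(\tfrac{\theta_n+m}{\delta}\bigr)=\int_{-\delta/2}^{\delta/2}\widehat B(t)\,e\bigl((\theta_n+m)t/\delta\bigr)\,\dt$, these numbers are, up to normalisation, the Fourier coefficients on the circle $[-\delta/2,\delta/2]$ of the continuous function $t\mapsto e(\theta_n t/\delta)\widehat B(t)$, so their vanishing gives $\widehat B\equiv0$, whence $B\equiv0$ and $A=A^\ast$. (For $n=0$, where $x_+^0$ has a jump, this is Vaaler's case in \cite{V} and is handled by the same scheme with a small modification at the discontinuity.)

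The main obstacle is step (4): pinning down the exact zero set and sign pattern of $A^\ast(x)-x_+^n$. The annihilation property of $s_\gamma$, the optimality inequality, the evaluation of the constant, and the uniqueness argument are all essentially formal once (4) is in hand, but proving that $G_n(\,\cdot\,;\theta_n)$ meets $x_+^n$ with no spurious coincidences genuinely requires the structural input of \cite{Lit2} on the digamma representation and the Euler polynomials --- it is the exact counterpart of the properties of $H_n$ underlying Lemma~\ref{lem2.2}. One should note that the naive guess ``average of the extremal majorant and minorant of $\sgn(x)x^n$'' (these have exponential type $2\pi\delta$) is \emph{not} optimal: it interpolates at twice the critical density and produces a Bernoulli-type constant rather than $|E_{n+1}(\theta_n)|$; and although the substitution $A\mapsto 2A-x^n$ shows the $L^1$ problems for $x_+^n$ and $\sgn(x)x^n$ are equivalent up to the factor $2$, this reduction still needs exactly the same sign analysis.
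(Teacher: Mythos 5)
The paper does not prove Lemma~\ref{lem3.1}: it is imported verbatim as Theorem~6.2 of \cite{Lit2}, and the surrounding lemmas (the $O(x^{-2})$ decay, the tail Fourier transform, and the sign identities (\ref{sin})--(\ref{cos})) are likewise quoted from \cite{Lit2} rather than derived here. So there is no in-paper proof to compare against; your proposal is an attempt to reconstruct Littmann's argument from scratch.

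As a reconstruction it is on the right track and structurally sound, and it is essentially the route \cite{Lit2} takes: the annihilation of $L^1$ bandlimited functions by the square wave $s_\gamma$ (a Paley--Wiener plus dominated-convergence argument, correctly handled), the resulting lower bound $\int|A-x_+^n|\ge\bigl|\int(A-x_+^n)s_\gamma\bigr|$ with equality iff the signs line up, the evaluation of the constant via the tail Fourier transform $-n!/(2\pi i t)^{n+1}$ paired against the square-wave series and compared with (\ref{Eulereven})--(\ref{Eulerodd}), and uniqueness by forcing interpolation at the jump set of $s_\gamma$ followed by the standard sampling/uniqueness argument for continuous compactly supported $\widehat B$. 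You also correctly flag that the decisive content is your step (4) --- that $\delta^{-n}G_n(\delta x;\theta_n)-x_+^n$ vanishes exactly on $\tfrac1\delta(\theta_n+\Z)$ and alternates sign there --- and you do not prove it, deferring to the digamma representation in \cite{Lit2}. That is a genuine gap in the sense that the hard structural lemma is assumed rather than established, but it is precisely the same deferral the paper itself makes, so the proposal is consistent with (and more detailed than) the paper's treatment. One small point worth tightening: in the uniqueness step you should say explicitly that continuity of $\widehat B$ forces $\widehat B(\pm\delta/2)=0$, so the square-wave frequencies $\pm(2j+1)\delta/2$ really all lie in the zero set of $\widehat B$ and the pairing vanishes term by term.
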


\begin{lemma}[cf. Lemma 5.1, Theorem 4.3 and proof of Theorem 6.2 of \cite{Lit2}] Define the functions 
\begin{equation}
\varphi_n(x; \delta) =  \delta^{-n}G_{n}(\delta x;\theta_n)- x_+^n
\end{equation}

\begin{enumerate}
\item[(i)] For any $\delta >0$ the function $\varphi_n(x; \delta)$ is $O(x^{-2})$ as $|x| \to \infty$ and its Fourier transform satisfies 
\begin{equation}\label{Sec3.0}
 \hat{\varphi}_n(t;\delta) = - \dfrac{n!}{(2 \pi i t)^{n+1}}  \ \ \ \textrm{if} \ \ \ |t| \geq \delta/2
\end{equation}
\item[(ii)] Regarding the sign of $\varphi_n(x; \delta)$ we have
\begin{eqnarray}
 (-1)^{k+1} \sgn (\sin \pi \delta x) &=& \sgn (\varphi_{2k}(x; \delta)) \label{sin}\\
(-1)^{k+1} \sgn (\cos \pi \delta x) &=& \sgn (\varphi_{2k+1}(x; \delta)) \label{cos}
\end{eqnarray}
\end{enumerate}
\end{lemma}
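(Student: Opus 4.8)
The plan is to reduce all three assertions to the normalized case $\delta = 1$, where they are exactly the content of Lemma 5.1, Theorem 4.3 and the proof of Theorem 6.2 of \cite{Lit2}, and then transfer them by a single dilation. Set $g_n(x) := G_n(x;\theta_n) - x_+^n$, the $\delta = 1$ instance of $\varphi_n$. Since $\theta_n$ does not depend on $\delta$ and $(\delta x)_+^n = \delta^n x_+^n$ for $\delta > 0$, one has the homogeneity identity
\begin{equation*}
\varphi_n(x;\delta) = \delta^{-n}G_n(\delta x;\theta_n) - x_+^n = \delta^{-n}\bigl(G_n(\delta x;\theta_n) - (\delta x)_+^n\bigr) = \delta^{-n}\,g_n(\delta x),
\end{equation*}
and everything below is this identity combined with elementary scaling.

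For part (i), Lemma 5.1 of \cite{Lit2} gives $g_n(x) = O(x^{-2})$ as $|x| \to \infty$; this is where the asymptotic expansion of $\phi = \Gamma'/\Gamma$ and the precise polynomial correction $\tfrac12\sum_{k=0}^{n}E_k(\theta_n)z^{-k-1}$ in the definition of $G_n$ enter, as these are exactly what cancels the contributions of order $x^{-1}$ and larger. Consequently $\varphi_n(x;\delta) = \delta^{-n}g_n(\delta x) = O(x^{-2})$, so $\varphi_n(\cdot\,;\delta) \in L^1(\R)$ and $\widehat{\varphi}_n(\cdot\,;\delta)$ is continuous. The Fourier transform of a dilate scales in the usual way, so $\widehat{\varphi}_n(t;\delta) = \delta^{-n-1}\widehat{g}_n(t/\delta)$, and Theorem 4.3 of \cite{Lit2} identifies $\widehat{g}_n(s) = -\frac{n!}{(2\pi i s)^{n+1}}$ for $|s| \geq \tfrac12$ — the point being that $G_n(\cdot\,;\theta_n)$ has exponential type $\pi$, hence Fourier support contained in $[-\tfrac12,\tfrac12]$ in the $e(x) = e^{2\pi i x}$ normalization, so away from that band $\widehat{g}_n$ only records the distributional transform $\frac{n!}{(2\pi i s)^{n+1}}$ of $x_+^n$ with a minus sign. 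Substituting $s = t/\delta$ and collecting the powers of $\delta$ yields $\widehat{\varphi}_n(t;\delta) = -\frac{n!}{(2\pi i t)^{n+1}}$ for $|t| \geq \delta/2$, which is (\ref{Sec3.0}).

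For part (ii), since $\delta^{-n} > 0$ for $\delta > 0$ the homogeneity identity gives $\sgn\varphi_n(x;\delta) = \sgn g_n(\delta x)$, so it suffices to know the sign of $g_n$ on $\R$. This is what is extracted in the proof of Theorem 6.2 of \cite{Lit2}: $G_n(\cdot\,;\theta_n)$ interpolates $x_+^n$ — together with its derivative at the interior nodes — along the shifted integers $\theta_n + \Z$, which are the zeros of its prefactor $\sin\pi(z-\theta_n)$, and on each open interval between consecutive nodes $g_n$ keeps a constant sign that reverses from one interval to the next. The explicit constant $(-1)^{k+1}$, and the appearance of $\cos$ in place of $\sin$ when $n$ is odd (then $\theta_n = \tfrac12$ and $\sin\pi(x-\tfrac12) = -\cos\pi x$), are pinned down by inspecting $g_n$ on a single inter-node interval. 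This yields $\sgn g_{2k}(x) = (-1)^{k+1}\sgn(\sin\pi x)$ and $\sgn g_{2k+1}(x) = (-1)^{k+1}\sgn(\cos\pi x)$, and replacing $x$ by $\delta x$ produces (\ref{sin}) and (\ref{cos}).

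Since the substance of all three claims already lives in \cite{Lit2}, I do not anticipate a genuine mathematical obstacle; the one place that needs care is the normalization bookkeeping in part (i). One must keep straight that exponential type $\pi$ corresponds to Fourier support $[-\tfrac12,\tfrac12]$ in the $e(x) = e^{2\pi i x}$ convention — so that the dilation by $\delta$ moves the threshold to $\delta/2$ rather than to $\delta$ or $\pi\delta$ — and that the three factors $\delta^{-n}$, $\delta^{-1}$ and $\delta^{n+1}$ combine to make the final formula independent of $\delta$.
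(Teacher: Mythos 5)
Your proposal is correct and is essentially the paper's approach: the paper offers no independent proof of this lemma but imports it wholesale from Littmann's Lemma 5.1, Theorem 4.3 and the proof of Theorem 6.2, with the dilation already built into the statement, and your reduction $\varphi_n(x;\delta)=\delta^{-n}g_n(\delta x)$ together with the scaling $\widehat{\varphi}_n(t;\delta)=\delta^{-n-1}\widehat{g}_n(t/\delta)$ is exactly the bookkeeping implicit there (and consistent with the paper's own remark that $\widehat{\varphi}_n(t;\delta)=\delta^{-n-1}\widehat{\varphi}_n(t/\delta;1)$). Nothing further is needed.
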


\begin{remark} It is possible to write an explicit formula for the Fourier transform $\hat{\varphi}_n(t;\delta)$ when $|t|<\delta/2$, as done in (\ref{Sec2.4}). For the sake of completeness we quote this result of F. Littmann (unpublished). Define the function
\begin{equation*}
 h_{\alpha}(z) = e^{\alpha z}(1 - e^z)^{-1} - \tfrac{1}{2}
\end{equation*}
then
\begin{equation}\label{FT3}
 \hat{\varphi}_n(t; 1) = - \dfrac{n!}{(2 \pi i t)^{n+1}} + h_{\theta_n}^{(n)}(-2 \pi i t) \ \ \ \textrm{for} \ \  |t|<1/2
\end{equation}
and in general 
\begin{equation}\label{FT3}
 \hat{\varphi}_n(t; \delta) = \delta^{-n-1} \hat{\varphi}_n(\tfrac{t}{\delta}; 1) \ \ \ \textrm{for} \ \  |t|<\delta/2
\end{equation}
\end{remark}

Let $N$ be a nonnegative integer. To describe the trigonometric polynomial of degree at most $N$ that best approximates the Bernoulli periodic function $\mathcal{B}_{n+1}(x)$ in the $L^1(\R/\Z)$-norm it will be convenient to use $\delta = 2N+2$. Define
\begin{equation}\label{Sec3.1}
R_{n+1}(x;N) = \sum_{k=-N}^{N} \widehat{R}_{n+1}(k;N)\,e(kx)
\end{equation}
where the Fourier coefficients are given by
\begin{equation}\label{Sec3.1.1}
\widehat{R}_{n+1}(0;N) = -(n+1)\,\widehat{\varphi}_{n}(0; 2N+2)
\end{equation}
and 
\begin{equation}\label{Sec3.1.2}
\widehat{R}_{n+1}(k;N) = -(n+1)\left(\widehat{\varphi}_{n}(k; 2N+2) + \frac{n!}{(2\pi i k)^{n+1}}\right)
\end{equation}

\begin{proof}[Proof of Theorem \ref{thm3.4}]
The case $n=0$ was done by Vaaler in \cite{V}, so we will work here with $n\geq 1$. Throughout this proof we use $\delta = 2N+2$. We can argue as in the beginning of the proof of Theorem \ref{thm2.3} to establish the Poisson summation formula at every point $x \in \R/\Z$
\begin{equation}\label{Sec3.2}
\sum_{l=-\infty}^{\infty} \varphi_n(x+l; \delta) = \sum_{l=-\infty}^{\infty} \widehat{\varphi}_n(l;\delta)e(lx)
\end{equation}
From (\ref{Bernoulli periodic}), (\ref{Sec3.0}) and (\ref{Sec3.1}) we find that 
\begin{equation}\label{Sec3.3}
R_{n+1}(x;N) - \mathcal{B}_{n+1}(x) = -(n+1)\sum_{l=-\infty}^{\infty} \widehat{\varphi}_n(l;\delta)e(lx)
\end{equation}
Therefore, from (\ref{Sec3.2}) and (\ref{Sec3.3}) we have
\begin{align}\label{Sec3.4}
 \begin{split}
  \int_{\R/\Z} |R_{n+1}(x;N) - \mathcal{B}_{n+1}(x)|\, \dx & = (n+1) \int_{\R/\Z} \left|\sum_{l=-\infty}^{\infty} \varphi_n(x+l; \delta)\right|\dx
 \end{split}
\end{align}
Using (\ref{sin}), (\ref{cos}) and Lemma \ref{lem3.1} we find that expression (\ref{Sec3.4}) is equal to
\begin{align}
 \begin{split}
  = (n+1) \int_{\R/\Z} \sum_{l=-\infty}^{\infty} \left| \varphi_n(x+l; \delta)\right|\dx  & = (n+1)\int_{\R}|\varphi_n(x; \delta)|\,\dx \\
& = \frac{|E_{n+1}(\theta_n)|}{\delta^{n+1}}
 \end{split}
\end{align}
and this proves that equality happens in (\ref{trig2}) when $W(x) = R_{n+1}(x;N)$. 

To prove uniqueness we divide the argument in two cases. Suppose first that $n$ is an even integer. As $\sgn(\sin \pi x)$ is a normalized function of bounded variation on $[0,2]$ its Fourier expansion
\begin{equation}\label{FS}
\sgn(\sin \pi x) = \dfrac{2}{\pi i} \sum_{k=-\infty}^{\infty} \dfrac{1}{(2k+1)}\, e( (k+\tfrac{1}{2})\,x)
\end{equation}
converges at every point $x$ and the partial sums are uniformly bounded. For a general trigonometric polynomial $W(x)$ of degree at most $N$ we have, by (\ref{FS}), (\ref{Bernoulli periodic}) and (\ref{Eulerodd})
\begin{align}
\begin{split}
\int_{\R/\Z} |W(x) &- \mathcal{B}_{n+1}(x)|\, \dx \geq \left |\int_{\R/\Z} (W(x) - \mathcal{B}_{n+1}(x)) \sgn \{\sin \pi \delta x\}\, \dx \right|\label{inequality} \\
& =  \left|\int_{\R/\Z} \mathcal{B}_{n+1}(x) \sgn \{\sin \pi \delta x\}\, \dx \right| \\
& =  \left| \dfrac{2}{\pi i} \sum_{k = -\infty}^{\infty} (2k +1)^{-1} \int_{\R/\Z} \mathcal{B}_{n+1}(x) e((k+\tfrac{1}{2})\delta x)\, \dx \right|  \\
& =  \frac{2 (n+1)!}{\pi^{n+2}\delta^{n+1}} \sum_{k=-\infty}^{\infty} \frac{1}{(2k+1)^{n+2}} = \frac{|E_{n+1}(\theta_n)|}{\delta^{n+1}} 
\end{split}
\end{align}
which proves (\ref{trig2}). If equality happens in (\ref{inequality}) we must have
\begin{equation}
W\left(\tfrac{k}{2N+2}\right) = \mathcal{B}_{n+1}\left(\tfrac{k}{2N+2}\right) \ \ \ \textrm{for} \ \ \ k = 1,2,...,2N+1.
\end{equation}
Since the degree of $W(x)$ is at most $N$, such polynomial exists and is unique \cite[Vol. II, page 1]{Z}. It is not hard to see that $R_{n+1}(x;N)$ satisfies the same property (equations (\ref{sin}), (\ref{Sec3.2}) and (\ref{Sec3.3})), so we must have $ R_{n+1}(x;N)= W(x)$. The proof for $n$ odd integer follows the same ideas using (\ref{Eulereven}), (\ref{cos}) and changing $x$ by $x + 1/2$ in (\ref{FS}). 
\end{proof}

\section{Erd\"{o}s-Tur\'{a}n inequalities}
Let $x_1, x_2, \dots , x_M$ be a finite set of points in $\R/\Z$.  A basic problem in the theory
of equidistribution is to estimate the discrepancy of the points $x_1, x_2, \dots , x_M$ by an expression
that depends on the Weyl sums
\begin{equation}\label{et0}
\sum_{m=1}^M e(kx_m),\quad\text{where}\quad k = 1, 2, \dots , N.
\end{equation} 
This is most easily accomplished by using the sawtooth function $\Psi:\R/\Z\rightarrow \R$, defined in the Introduction by
\begin{equation*}\label{et1}
\Psi(x) = \begin{cases} x-[x]-\tfrac12 & \textrm{if} \ \ x\not\in \Z\\
            0 & \text{if} \ \ x \in \Z\end{cases}
\end{equation*}
where $[x]$ is the integer part of $x$.  A simple definition for the discrepancy of the finite set is 
\begin{equation*}\label{et2}
\Delta_M(\bx) = \sup_{y\in\R/\Z} \Bigl|\sum_{m=1}^M \Psi\bigl(x_m - y\bigr)\Bigr|.
\end{equation*}
In this setting the Erd\"{o}s-Tur\'{a}n inequality is an upper bound for $\Delta_M$ of the form
\begin{equation}\label{et3}
\Delta_M(\bx) \le c_1 MN^{-1} + c_2 \sum_{k=1}^N k^{-1} \Bigl|\sum_{m=1}^M e(kx_m)\Bigr|,
\end{equation}
where $c_1$ and $c_2$ are positive constants.  In applications to specific sets the parameter $N$ can be 
selected so as to minimize the right hand side of (\ref{et3}).  Bounds of this kind follow easily from knowledge 
of the extremal trigonometric polynomials that majorize and minorize the function $\Psi(x)$.  This is discussed in
\cite{CV}, \cite{ET}, \cite{M2}, \cite{V}, and \cite{V2}.  An extension to the spherical cap discrepancy is derived 
in \cite{LV}, and a related inequality in several variables is obtained in \cite{BMV}.\\
\\
As already noted in the Introduction of this paper, the sawtooth function $\Psi(x)$ coincides with the first Bernoulli periodic function $\mathcal{B}_1(x)$. One is naturally led to generalize the concept of discrepancy using the other Bernoulli functions. For $n\geq 0$ define
\begin{equation}\label{Sec4.1}
\Delta_M^{n+1}(\bx) = \sup_{y\in\R/\Z} \Bigl|\sum_{m=1}^M \mathcal{B}_{n+1}\bigl(x_m - y\bigr)\Bigr|
\end{equation}
We recall the extremal trigonometric polynomials of degree at most $N$ given by Theorem \ref{thm2.3}
\begin{equation}\label{Sec4.2}
P_{n+1}(x;N,\beta_n) \leq \mathcal{B}_{n+1}(x) \leq P_{n+1}(x;N,\alpha_n) \vspace{0.3cm}
\end{equation}
The following bound for the generalized discrepancy $\Delta_M^{n+1}(\bx)$ will follow from (\ref{Sec4.2}) and algebraic manipulations.
\begin{proposition}
Let $\bx = (x_1,x_2,...,x_M)$ be a sequence of numbers in $\R/\Z$. Then
\begin{align}\label{Sec4.5}
 \begin{split}
\Delta_{M}^{n+1}(\bx) \leq \max & \left\{ - \tfrac{M B_{n+1}(\beta_n)}{(N+1)^{n+1}} + \sum_{0 <|k| \leq N} \left|\widehat{P}_{n+1}(k;N,\beta_n)\right| \left|\sum_{m=1}^M e(x_m k)\right| \right.,\\
& \left. \tfrac{M B_{n+1}(\alpha_n)}{(N+1)^{n+1}} + \sum_{0 <|k| \leq N} \left|\widehat{P}_{n+1}(k;N,\alpha_n)\right| \left|\sum_{m=1}^M e( x_m k)\right| \right\} 
 \end{split}
\end{align}
\end{proposition}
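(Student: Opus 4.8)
The plan is to sandwich the sum $\sum_{m=1}^M \mathcal{B}_{n+1}(x_m - y)$ between the corresponding sums built from the extremal minorant and majorant, and then expand each of those in its (finite) Fourier series. Concretely, fix $y \in \R/\Z$. From (\ref{Sec4.2}) we get, for each $m$, the pointwise bounds $P_{n+1}(x_m - y; N, \beta_n) \le \mathcal{B}_{n+1}(x_m - y) \le P_{n+1}(x_m - y; N, \alpha_n)$, and summing over $m$ yields
\begin{equation*}
\sum_{m=1}^M P_{n+1}(x_m - y; N, \beta_n) \;\le\; \sum_{m=1}^M \mathcal{B}_{n+1}(x_m - y) \;\le\; \sum_{m=1}^M P_{n+1}(x_m - y; N, \alpha_n).
\end{equation*}

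Next I would substitute the Fourier expansion (\ref{Sec2.5.1}), so that for $\alpha \in \{\alpha_n, \beta_n\}$,
\begin{equation*}
\sum_{m=1}^M P_{n+1}(x_m - y; N, \alpha) = \sum_{k=-N}^{N} \widehat{P}_{n+1}(k; N, \alpha)\, e(-ky) \sum_{m=1}^{M} e(k x_m).
\end{equation*}
Pulling out the $k=0$ term, which contributes $M\,\widehat{P}_{n+1}(0;N,\alpha) = M B_{n+1}(\alpha)/(N+1)^{n+1}$ by (\ref{Sec2.6}), and bounding the remaining terms by the triangle inequality (using $|e(-ky)| = 1$), I get
\begin{equation*}
\Bigl|\sum_{m=1}^M P_{n+1}(x_m - y; N, \alpha)\Bigr| \le \frac{M B_{n+1}(\alpha)}{(N+1)^{n+1}} + \sum_{0<|k|\le N}\bigl|\widehat{P}_{n+1}(k;N,\alpha)\bigr|\,\Bigl|\sum_{m=1}^M e(x_m k)\Bigr|
\end{equation*}
for $\alpha = \alpha_n$, and similarly for $\alpha = \beta_n$ one has a lower bound $\ge -\tfrac{M B_{n+1}(\beta_n)}{(N+1)^{n+1}} - \sum_{0<|k|\le N}|\widehat{P}_{n+1}(k;N,\beta_n)||\sum_m e(x_m k)|$ (recall $B_{n+1}(\beta_n) \le 0$ since $\beta_n$ is the minimizer, so $-B_{n+1}(\beta_n) \ge 0$; this sign bookkeeping is what makes the two sides of (\ref{Sec4.5}) come out with the stated signs).

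Finally, since $\sum_m \mathcal{B}_{n+1}(x_m - y)$ lies between these two quantities, its absolute value is at most the maximum of the absolute values of the two bounding expressions, which is exactly the right-hand side of (\ref{Sec4.5}); taking the supremum over $y \in \R/\Z$ (the right side being independent of $y$) finishes the proof. The only mild subtlety — hardly an obstacle — is the sign accounting: one should note that $B_{n+1}(\alpha_n) \ge 0 \ge B_{n+1}(\beta_n)$ (from the remark following Theorem~\ref{thm2.3} that $B_{n+1}$ attains its maximum on $[0,1]$ at $\alpha_n$ and its minimum at $\beta_n$, together with $\int_0^1 B_{n+1} = 0$ for $n \ge 1$), so that the constant terms $-M B_{n+1}(\beta_n)/(N+1)^{n+1}$ and $M B_{n+1}(\alpha_n)/(N+1)^{n+1}$ are both nonnegative and the upper bound $\max\{\cdots\}$ genuinely dominates $|\sum_m \mathcal{B}_{n+1}(x_m-y)|$ from both the majorant and minorant sides.
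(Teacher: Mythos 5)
Your argument is the same as the paper's: sum the pointwise sandwich $P_{n+1}(\cdot\,;N,\beta_n)\le\mathcal{B}_{n+1}\le P_{n+1}(\cdot\,;N,\alpha_n)$ over the translates $x_m-y$, expand both trigonometric polynomials in their finite Fourier series, isolate the $k=0$ coefficient via (\ref{Sec2.6}), bound the remaining terms by the triangle inequality using $|e(-ky)|=1$ and $|\sum_m e(k(x_m-y))|=|\sum_m e(kx_m)|$, and take the supremum over $y$; this is exactly the paper's chain (\ref{Sec4.6})--(\ref{Sec4.8}). One sign slip to fix: on the minorant side the expansion gives $\sum_{m=1}^M P_{n+1}(x_m-y;N,\beta_n)\ \ge\ \tfrac{M B_{n+1}(\beta_n)}{(N+1)^{n+1}}-\sum_{0<|k|\le N}\bigl|\widehat{P}_{n+1}(k;N,\beta_n)\bigr|\,\bigl|\sum_{m=1}^M e(kx_m)\bigr|$ (with $+B_{n+1}(\beta_n)$, a nonpositive constant), not the bound $\ge -\tfrac{M B_{n+1}(\beta_n)}{(N+1)^{n+1}}-\cdots$ you wrote, which as stated is false (take $x_m=m/M$ with $M>N$, so all nonzero Weyl sums vanish and your inequality would force $B_{n+1}(\beta_n)\ge 0$). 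With the corrected lower bound, its negation is precisely the first entry of the max in (\ref{Sec4.5}), and your concluding absolute-value step goes through exactly as in the paper.
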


\begin{proof}
Let $y \in \R/\Z$. From (\ref{Sec4.2}) we have

\begin{align*}
\begin{split}
 \sum_{m=1}^M P_{n+1}(x_m-y;N,\beta_n) & \leq \sum_{m=1}^M \mathcal{B}_{n+1}\bigl(x_m - y\bigr) \leq \sum_{m=1}^M P_{n+1}(x_m-y;N,\alpha_n)
\end{split}
\end{align*}
which implies that 
\begin{align}\label{Sec4.6}
\begin{split}
 \sum_{m=1}^M \sum_{|k| \leq N} \widehat{P}_{n+1}(k;N,\beta_n)\, & e(k(x_m - y))  \leq \sum_{m=1}^M \mathcal{B}_{n+1}\bigl(x_m - y\bigr)  \\
&\leq \sum_{m=1}^M \sum_{|k| \leq N} \widehat{P}_{n+1}(k;N,\alpha_n)\, e(k(x_m - y)) .
\end{split}
\end{align} 
Interchanging the sums in (\ref{Sec4.6}) we get
\begin{align}\label{Sec4.7}
\begin{split}
& M\widehat{P}_{n+1}(0;N,\beta_n) + \sum_{0< |k| \leq N} \widehat{P}_{n+1}(k;N,\beta_n)\,  \sum_{m=1}^M e(k(x_m - y))  \\
\leq & \, \sum_{m=1}^M \mathcal{B}_{n+1}\bigl(x_m - y\bigr)  \\
\leq & \, M\widehat{P}_{n+1}(0;N,\alpha_n) + \sum_{0 <|k| \leq N} \widehat{P}_{n+1}(k;N,\alpha_n)\,  \sum_{m=1}^M e(k(x_m - y))
\end{split}
\end{align} 
and from (\ref{Sec4.7}) we conclude that 
\begin{align}\label{Sec4.8}
\begin{split}
& - \left|M\widehat{P}_{n+1}(0;N,\beta_n)\right| - \sum_{0< |k| \leq N} \left|\widehat{P}_{n+1}(k;N,\beta_n)\right|\,  \left|\sum_{m=1}^M e(k(x_m - y))\right|  \\
\leq & \, \sum_{m=1}^M \mathcal{B}_{n+1}\bigl(x_m - y\bigr)  \\
\leq & \, \left|M\widehat{P}_{n+1}(0;N,\alpha_n)\right| + \sum_{0 <|k| \leq N} \left|\widehat{P}_{n+1}(k;N,\alpha_n)\right|\,  \left|\sum_{m=1}^M e(k(x_m - y))\right|.
\end{split}
\end{align} 
Expression (\ref{Sec2.6}) gives us
\begin{equation*}
\left|\widehat{P}_{n+1}(0;N,\beta_n)\right| = - \tfrac{ B_{n+1}(\beta_n)}{(N+1)^{n+1}} \ \ \ \textrm{and} \ \ \ \left|\widehat{P}_{n+1}(0;N,\alpha_n)\right| =  \tfrac{ B_{n+1}(\alpha_n)}{(N+1)^{n+1}}.
\end{equation*}
This fact allied to the equality
\begin{equation*}
\left|\sum_{m=1}^M e(k(x_m - y))\right| = \left|\sum_{m=1}^M e(kx_m)\right|
\end{equation*}
show that (\ref{Sec4.8}) implies the desired bound (\ref{Sec4.5}).

\end{proof}

For applications, it would be desirable to obtain simple bounds for the Fourier coefficients $\widehat{P}_{n+1}(k;N,\alpha)$. Another question that arises here is: are there any interesting inequalities relating the discrepancies $\Delta_{M}^{n}$ and $\Delta_{M}^{n+1}$?

\section{Bounds for Hermitian forms}

A classical application of the theory of extremal functions of exponential type provides sharp bounds for some Hilbert-type inequalities. In \cite{Lit}, F. Littmann obtained the following result (recall the Bernoulli polynomials $B_n(x)$ and the sequences  $\{\alpha_n\}_{n \in \N_0}$ and $\{\beta_n\}_{n \in \N_0}$ defined in the Introduction).

\begin{proposition}[cf. Corollary 2 of \cite{Lit}]\label{prop11}
Let $\{\lambda_r\}_{r=1}^{N}$ be a sequence of well-spaced real numbers, i.e. $|\lambda_r - \lambda_s| \geq \delta$ for all $r \neq s$. Let $\{a_r\}_{r=1}^N$ be a sequence of complex numbers and $m \in \N$. We have
\begin{equation}\label{Sec6.1}
  -L_m(\delta) \sum_{r=1}^{N} |a_r|^2 \leq \sum_{\stackrel{r,s=1}{r\neq s}}^{N} \frac{a_r \overline{a_s}}{(i(\lambda_r- \lambda_s))^m} \leq   U_m(\delta) \sum_{r=1}^{N} |a_r|^2
\end{equation}
with the optimal constants
\begin{equation}
L_m(\delta) = (2\pi)^m \frac{B_m(\alpha_{m-1})}{m! \delta^m} \ \ \ \textrm{and} \ \ \ U_m(\delta) =   -(2\pi)^m \frac{B_m(\beta_{m-1})}{m! \delta^m} \nonumber
\end{equation}
\end{proposition}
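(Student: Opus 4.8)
The plan is to deduce Proposition~\ref{prop11} from the periodic extremal majorants and minorants of Theorem~\ref{thm2.3} by a duality/averaging argument, following the standard pattern used by Vaaler and Littmann. First I would reduce to the case $\delta = 1$ by a scaling argument: replacing $\lambda_r$ by $\lambda_r/\delta$ turns a $\delta$-well-spaced sequence into a $1$-well-spaced sequence and pulls out the factor $\delta^{-m}$, so it suffices to prove the two inequalities with $\delta = 1$ and the constants $(2\pi)^m B_m(\alpha_{m-1})/m!$ and $-(2\pi)^m B_m(\beta_{m-1})/m!$. Actually, since the real-line problem and the periodic problem have the same sharp constants up to the normalization (compare \eqref{int1}--\eqref{int2} with \eqref{Sec2.8.1}--\eqref{Sec2.8}), one could instead work directly on $\R/\Z$ after an initial reduction; but the cleanest route is to invoke the entire-function extremals of Lemma~2.1 (the functions $\delta^{-n}H_n(\delta x;\alpha_n)$ and $\delta^{-n}H_n(\delta x;\beta_n)$), since the Hermitian form involves $(i(\lambda_r-\lambda_s))^{-m}$, which is essentially the Fourier transform of the remainder $d_{m-1}$ at the relevant frequencies.

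The core step is the following. Write $n = m-1$ and consider the minorant $M(x) = \delta^{-n}H_n(\delta x;\beta_n) \le \sgn(x)x^n$, with $M(0) = 0$ because $0 = \beta_n/\delta$ is an interpolation node when $\beta_n = 0$; in the cases where $\beta_n \neq 0$ one uses instead that the odd function $\tfrac12(\sgn(x)x^n)$-type combination vanishes appropriately, but in all cases the relevant object is the even function $x \mapsto \sgn(x)x^n$ for $n$ odd (so $m$ even) — here I should be careful about parity, since $(i(\lambda_r-\lambda_s))^{-m}$ is a genuinely Hermitian kernel only when one uses the right real/imaginary part, and this is precisely why $\alpha_{m-1}$ and $\beta_{m-1}$ (rather than $\alpha_m,\beta_m$) appear. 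Granting this, the Fourier transform of $M(x) - \sgn(x)x^n$ is supported appropriately and equals $-2\cdot n!/(2\pi i t)^{n+1}$ for $|t|\ge \delta$ by \eqref{Sec2.5}; one then forms $\sum_{r,s} a_r\overline{a_s}\,\widehat{(\text{remainder})}(\lambda_r-\lambda_s)$ and recognizes it, via the well-spacing hypothesis $|\lambda_r-\lambda_s|\ge\delta$ for $r\neq s$, as $(\text{const})\sum_{r\neq s} a_r\overline{a_s}/(i(\lambda_r-\lambda_s))^m$ plus a diagonal term $\propto \widehat{(\text{remainder})}(0) = $ integral of the remainder $= -2B_{n+1}(\beta_n)/((n+1)\delta^{n+1})$ from \eqref{int1}. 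Positive-definiteness of the kernel $(x,y)\mapsto M(x-y)$ — which holds because $M$ is a nonnegative-definite function of exponential type, being (by construction) of the form $|(\sin\pi(\cdot))/\pi|^2 \times (\text{positive})$ or obtained as a limit of squares — forces $\sum_{r,s} a_r\overline{a_s} M(\lambda_r-\lambda_s) \ge 0$, and rearranging gives one side of \eqref{Sec6.1}; using the majorant $\delta^{-n}H_n(\delta x;\alpha_n)$ symmetrically gives the other.

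The main obstacle I anticipate is twofold: (i) correctly handling the parity of $m$, i.e.\ checking that for $m$ even the Hermitian form $\sum_{r\neq s} a_r\overline{a_s}/(i(\lambda_r-\lambda_s))^m$ is real and that the relevant extremal is the one for $\sgn(x)x^{m-1}$ with $m-1$ odd, so that $B_m(\alpha_{m-1})$ and $B_m(\beta_{m-1})$ (evaluated at the extrema of the Bernoulli polynomial $B_m$, which are the zeros-related points $z_m$) are exactly the constants $-\tfrac12\widehat{d}_{m-1}(0)\cdot(\text{factor})$; and (ii) justifying the positive-definiteness step rigorously. For (ii) the clean argument is that any extremal minorant $M$ of $\sgn(x)x^n$ of exponential type $2\pi\delta$ that is itself of the form $F(x) = G(x)$ with $\widehat{G}\ge 0$ — which one can read off from the explicit formula \eqref{candidates}, since the bracketed factor there is manifestly a nonnegative-definite expression times $(\sin\pi(z-\alpha)/\pi)^2$ — yields a nonnegative-definite function, hence $\sum a_r\overline{a_s}M(\lambda_r-\lambda_s)\ge0$. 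Once these two points are settled, the remaining manipulations (interchanging the order of summation, separating diagonal from off-diagonal, and invoking \eqref{int1}, \eqref{int2}, \eqref{Sec2.5}) are routine, and the optimality of the constants $L_m(\delta)$, $U_m(\delta)$ follows from the uniqueness statement in Lemma~2.1 together with a sequence of $a_r$ concentrating the mass so that the diagonal term dominates, exactly as in Littmann's Corollary~2 of \cite{Lit}. Since this proposition is quoted verbatim from \cite{Lit}, in the paper it suffices to cite that reference; the above is how one would reconstruct the proof.
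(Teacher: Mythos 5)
The paper itself does not prove this proposition; it is quoted directly from Corollary~2 of \cite{Lit}. Your reconstruction has the right skeleton (expand a Hermitian form in $\widehat{d}_n$, use the well-spacing hypothesis to identify the off-diagonal entries via \eqref{Sec2.5}, separate off the diagonal term from \eqref{int1}), but two load-bearing steps are wrong. First, the $\alpha_n/\beta_n$ labels are reversed: by \eqref{Sec2.2}, on the real line $\delta^{-n}H_n(\delta x;\alpha_n)$ is the \emph{minorant} and $\delta^{-n}H_n(\delta x;\beta_n)$ is the \emph{majorant} of $\sgn(x)x^n$; it is only the periodic inequality \eqref{Sec4.2} where the roles flip, because of the sign in \eqref{Sec2.11}. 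Your premise ``$M(x)=\delta^{-n}H_n(\delta x;\beta_n)\le\sgn(x)x^n$'' is therefore false.

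Second, and more seriously, the positive-definiteness step is misformulated. You assert $\sum_{r,s}a_r\overline{a_s}\,M(\lambda_r-\lambda_s)\ge0$ on the grounds that ``$M$ is a nonnegative-definite function of exponential type'' and that the bracket in \eqref{candidates} is ``manifestly a nonnegative-definite expression.'' Neither claim holds: $M$ is asymptotic to $\sgn(x)x^n$, hence unbounded and not integrable, so it is nowhere near a positive-definite function; and the bracket in \eqref{candidates} has oscillating signs from $\sgn_+(k)$ together with Bernoulli coefficients of both signs, so nothing there is ``manifest.'' The quadratic form that actually matters is $\sum_{r,s}a_r\overline{a_s}\,\widehat{d}_n(\lambda_r-\lambda_s;\delta,\beta_n)$, and its nonnegativity comes not from $M$ but from the fact that $d_n(\cdot;\delta,\beta_n)=M-\sgn(\cdot)(\cdot)^n$ is pointwise nonnegative \emph{and} integrable (Lemma~\ref{lem2.2}), whence
\begin{equation*}
\sum_{r,s}a_r\overline{a_s}\,\widehat{d}_n(\lambda_r-\lambda_s;\delta,\beta_n)=\int_{\R}d_n(x;\delta,\beta_n)\Bigl|\sum_{r}a_r\,e(-\lambda_r x)\Bigr|^2\dx\ge 0.
\end{equation*}
Expanding this with $\widehat{d}_n(0;\delta,\beta_n)=-2B_{n+1}(\beta_n)/((n+1)\delta^{n+1})$ from \eqref{int1} and $\widehat{d}_n(t;\delta,\beta_n)=-2\,n!/(2\pi i t)^{n+1}$ for $|t|\ge\delta$ from \eqref{Sec2.5}, using $|\lambda_r-\lambda_s|\ge\delta$ for $r\neq s$ and setting $m=n+1$, yields the upper bound with $U_m(\delta)$; the lower bound follows symmetrically from the integrable nonnegative function $-d_n(\cdot;\delta,\alpha_n)$. (The initial reduction to $\delta=1$ is unnecessary and is not actually carried through.) With these corrections your outline becomes the standard argument, but as written the central inequality is derived from a false premise.
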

There is a simple argument, due to H.L. Montgomery (see Corollary 1 of \cite{M}), that allows us to pass inequalities (\ref{Sec6.1}) to periodic versions. For this we define the periodic functions $p_m: \R/\Z-\{0\} \to \R$ and  $q_m: \R/\Z-\{0\} \to \R$ by
\begin{eqnarray}
p_m(x) &=& \sum_{k \in \Z} \frac{1}{(x+k)^m}  \\
q_m(x) &=& \sum_{k \in \Z} \frac{(-1)^k}{(x+k)^m}
\end{eqnarray}
For real numbers $x$ we write 
\begin{equation}
\|x\| = \textrm{min}\{ |x-m|: m \in \Z\} \nonumber
\end{equation}
for the distance from $x$ to the nearest integer. We have the following 
\begin{proposition}\label{prop12}
Let $\{\lambda_r\}_{r=1}^{N}$ be a sequence of well-spaced real numbers in $\R/\Z$, i.e. $\|\lambda_r - \lambda_s\| \geq \delta$ for all $r \neq s$. Let $\{a_r\}_{r=1}^N$ be a sequence of complex numbers and $m \in \N$. If $m$ is odd we have
\begin{equation}\label{Sec6.2}
-L_m(\delta) \sum_{r=1}^{N} |a_r|^2  \leq  \displaystyle\sum_{\stackrel{r,s=1}{r\neq s}}^{N} i^{-m} \,a_r \overline{a_s} \, p_m(\lambda_r - \lambda_s)  \leq   U_m(\delta) \sum_{r=1}^{N} |a_r|^2 
\end{equation}
and 
\begin{equation}\label{Sec6.2.1}
-L_m(\delta) \sum_{r=1}^{N} |a_r|^2  \leq  \displaystyle\sum_{\stackrel{r,s=1}{r\neq s}}^{N} i^{-m} \,a_r \overline{a_s} \, q_m(\lambda_r - \lambda_s)  \leq   U_m(\delta) \sum_{r=1}^{N} |a_r|^2
\end{equation}
If $m$ is even we have
\begin{align}\label{Sec6.3}
\begin{split}
-\left(2i^{-m}\zeta(m)+ L_m(\delta)\right) \sum_{r=1}^{N} |a_r|^2  &\leq \displaystyle\sum_{\stackrel{r,s=1}{r\neq s}}^{N} i^{-m} \,a_r \overline{a_s} \, p_m(\lambda_r - \lambda_s) \\
& \leq  \left(-2i^{-m}\zeta(m) + U_m(\delta)\right) \sum_{r=1}^{N} |a_r|^2 
\end{split}
\end{align}
and
\begin{align}\label{Sec6.3.1}
\begin{split}
\left((2 -2^{2-m})i^{-m}\zeta(m) -L_m(\delta)\right) & \sum_{r=1}^{N} |a_r|^2   \leq  \displaystyle\sum_{\stackrel{r,s=1}{r\neq s}}^{N} i^{-m} \,a_r \overline{a_s} \, q_m(\lambda_r - \lambda_s)  \\
& \leq   \left((2 -2^{2-m})i^{-m}\zeta(m) + U_m \right)(\delta) \sum_{r=1}^{N} |a_r|^2
\end{split}
\end{align}
where $\zeta$ is the Riemann zeta function.
\end{proposition}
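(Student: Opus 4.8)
The plan is to invoke Montgomery's periodization trick (Corollary 1 of \cite{M}) to deduce Proposition \ref{prop12} from the real-line inequality of Proposition \ref{prop11}. The mechanism is the following: the functions $p_m$ and $q_m$ are obtained from the kernel $x^{-m}$ on $\R$ by summing over the lattice $\Z$ (with a sign $(-1)^k$ for $q_m$), so a Hermitian form built on periodic well-spaced points $\{\lambda_r\}\subset\R/\Z$ with kernel $p_m$ (or $q_m$) should coincide with a limit of real-line Hermitian forms with kernel $x^{-m}$ evaluated at the well-spaced real points $\{\lambda_r + k\}_{r,k}$. Concretely, for a fixed positive integer $T$, consider in $\R$ the $N(2T+1)$ points $\lambda_r + k$ with $1\le r\le N$ and $|k|\le T$, together with coefficients $b_{r,k} = a_r$ (in the $q_m$ case, $b_{r,k} = (-1)^k a_r$). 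These points are well-spaced with the same gap $\delta$ (here one uses $\|\lambda_r-\lambda_s\|\ge\delta$ to guarantee $|(\lambda_r+k)-(\lambda_s+k')|\ge\delta$ for distinct pairs), so Proposition \ref{prop11} applies and yields
\[
  -L_m(\delta)\sum_{r,k}|b_{r,k}|^2 \le \sum_{(r,k)\neq(s,k')} \frac{b_{r,k}\overline{b_{s,k'}}}{\big(i(\lambda_r+k-\lambda_s-k')\big)^m} \le U_m(\delta)\sum_{r,k}|b_{r,k}|^2.
\]

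\textbf{Key steps.} First I would note that $\sum_{|k|\le T}|b_{r,k}|^2 = (2T+1)|a_r|^2$, so both outer bounds are exactly $(2T+1)$ times the corresponding bounds in Proposition \ref{prop12}. Next, I would split the middle sum according to whether $r=s$ or $r\neq s$. The $r\neq s$ part, after the change of variable $k' \mapsto k + j$, telescopes in $k$ and produces (in the limit $T\to\infty$, or exactly up to boundary terms of size $O(1/T)$ that vanish) the sum $\sum_{r\neq s} i^{-m} a_r\overline{a_s}\, p_m(\lambda_r-\lambda_s)$ — or with the alternating sign, $q_m(\lambda_r-\lambda_s)$. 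The diagonal part $r=s$, $k\neq k'$ contributes $\sum_r |a_r|^2 \sum_{k\neq k'} (i(k-k'))^{-m}$, and here one recognizes $\sum_{j\neq 0} (ij)^{-m}$, which is $0$ for $m$ odd (odd kernel) and $2 i^{-m}\zeta(m)$ for $m$ even; in the alternating case it is $\sum_{j\neq 0}(-1)^j (ij)^{-m}$, equal to $0$ for $m$ odd and $-(2-2^{2-m})i^{-m}\zeta(m)$ for $m$ even (using $\sum_{j\ge 1}(-1)^j j^{-m} = -(1-2^{1-m})\zeta(m)$ and doubling). Moving this diagonal constant to the outer bounds gives precisely the four displayed inequalities: for $m$ odd the diagonal vanishes so \eqref{Sec6.2} and \eqref{Sec6.2.1} are immediate, while for $m$ even the constant $2i^{-m}\zeta(m)$ (resp.\ $-(2-2^{2-m})i^{-m}\zeta(m)$) is subtracted, yielding \eqref{Sec6.3} (resp.\ \eqref{Sec6.3.1}). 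Finally I would divide through by $(2T+1)$ and let $T\to\infty$, checking that the truncated $p_m$, $q_m$ partial sums converge (absolutely, since $m\ge 1$ and in fact $m\ge 2$ gives absolute convergence; for $m=1$ one uses the symmetric partial sums, which is exactly why $p_1$, $q_1$ are defined as $\lim$ of symmetric sums and why the odd case is the only place $m=1$ occurs).

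\textbf{Main obstacle.} The delicate point is the $m=1$ case (odd $m$), where the kernel series for $p_1$ and $q_1$ converge only conditionally; one must be careful to take symmetric truncations $|k|\le T$ throughout and verify that the off-diagonal telescoping and the passage to the limit are legitimate in this conditionally convergent regime — i.e.\ that the boundary terms from telescoping genuinely go to $0$ and that one may interchange the finite double sum over $r,s$ with the limit in $T$. For $m\ge 2$ everything is absolutely convergent and the argument is routine bookkeeping; the only real care needed is confirming the arithmetic of the diagonal constants, i.e.\ that $\sum_{j\neq 0}(ij)^{-m}$ and $\sum_{j\neq 0}(-1)^j(ij)^{-m}$ evaluate to the constants appearing in \eqref{Sec6.3} and \eqref{Sec6.3.1}, which is a short computation with the Riemann zeta function and the Dirichlet eta function $\eta(m) = (1-2^{1-m})\zeta(m)$.
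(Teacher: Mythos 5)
Your proposal is correct and follows essentially the same route as the paper: Montgomery's periodization trick, i.e.\ applying Proposition \ref{prop11} to the translated points $\lambda_r+k$ with coefficients $a_r$ (resp.\ $(-1)^k a_r$), dividing by the number of copies and letting it tend to infinity, so that the Ces\`aro-weighted inner sums converge to $p_m$, $q_m$ and the diagonal constants $0$, $2i^{-m}\zeta(m)$, $-(2-2^{2-m})i^{-m}\zeta(m)$ are absorbed into the bounds. The only cosmetic difference is your symmetric block $|k|\le T$ versus the paper's indices $1\le j\le K$, which produce the same Fej\'er weights and the same limits (including the $m=1$ case via symmetric partial sums, which the paper treats implicitly).
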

\begin{proof}
We prove here inequality (\ref{Sec6.3}). Apply Proposition \ref{prop11} with a doubly-indexed set of $NK$ variables $a_{rj}$, $1\leq r\leq N$, $1\leq j \leq K$ and well-spaced constants $\lambda_{rj}$. Then

\begin{equation}
  -L_m(\delta) \sum_{r,j} |a_{rj}|^2 \leq \sum_{\stackrel{r,s,j,\,l}{(r,j)\neq (s,l)}} \frac{a_{rj} \overline{a_{sl}}}{(i(\lambda_{rj}- \lambda_{sl}))^m} \leq   U_m(\delta) \sum_{r,j} |a_{rj}|^2  \nonumber
\end{equation}
Now put $a_{rj} = a_r$ and $\lambda_{rj} = \lambda_r + j$. Then
\begin{align}\label{Sec6.4}
\begin{split}
 -KL_m(\delta) \sum_{r} |a_{r}|^2 & \leq \
\sum_{r\neq s} i^{-m} a_{r} \overline{a_{s}} \sum_{j,\,l} (\lambda_r - \lambda_s + j - l)^{-m} \\
&+ i^{-m}\sum_{r}|a_r|^2 \sum_{j\neq l} (j-l)^{-m} 
 \leq KU_m(\delta) \sum_{r} |a_{r}|^2
\end{split}
\end{align}
Calling $j-l = k$ and dividing (\ref{Sec6.4}) by $K$ we obtain
\begin{align} \nonumber
\begin{split}
-L_m(\delta) \sum_{r} |a_{r}|^2 & \leq \sum_{r\neq s} i^{-m} a_{r} \overline{a_{s}} \sum_{k=-K}^K (1-|k|/K) (\lambda_r - \lambda_s + k)^{-m} \\
&+ i^{-m}\sum_{r}|a_r|^2 \sum_{\stackrel{k=-K}{k\neq 0}}^K (1-|k|/K)k^{-m} 
\leq U_m(\delta) \sum_{r} |a_{r}|^2
\end{split}
\end{align}
Now it is just a matter of sending $K \to \infty$ to obtain
\begin{align} \nonumber
\begin{split}
-L_m(\delta) \sum_{r} |a_{r}|^2 & \leq \sum_{r\neq s} i^{-m} a_{r} \overline{a_{s}} p_m(\lambda_r - \lambda_s) + 2 i^{-m}\zeta(m) \sum_{r}|a_r|^2  \\
& \leq U_m(\delta) \sum_{r} |a_{r}|^2
\end{split}
\end{align}
and this proves (\ref{Sec6.3}). To prove (\ref{Sec6.3.1}) we put $a_{rj} = (-1)^ja_r$ and repeat the argument. When $m$ is odd, the proofs of (\ref{Sec6.2}) and (\ref{Sec6.2.1}) are even simpler when we notice that
\begin{equation} \nonumber
\sum_{\stackrel{k=-K}{k\neq 0}}^K (1-|k|/K)k^{-m} = \sum_{\stackrel{k=-K}{k\neq 0}}^K (-1)^k(1-|k|/K)k^{-m} = 0
\end{equation}
\end{proof}
Using the identities 
\begin{equation}
q_1(x) = \frac{\pi}{\sin{\pi x}} \ \ , \ \ p_1(x) = \frac{\pi}{\tan \pi x} \ \ , \ \  p_2(x) = \frac{\pi^2}{\sin^2 \pi x}
\end{equation}
and relations (\ref{Riemann}) we obtain the following interesting special cases
\begin{corollary}
In the hypotheses of Proposition \ref{prop12} we have
\begin{eqnarray}
\left| \displaystyle\sum_{\stackrel{r,s=1}{r\neq s}}^N \frac{a_r \overline{a_s}}{\sin \pi (\lambda_r - \lambda_s)}\right| & \leq & \frac{1}{\delta} \sum_{r=1}^{N} |a_r|^2\ \ , \label{Sec6.5}\\
\left| \displaystyle\sum_{\stackrel{r,s=1}{r\neq s}}^N \frac{a_r \overline{a_s}}{\tan \pi (\lambda_r - \lambda_s)}\right| & \leq & \frac{1}{\delta} \sum_{r=1}^{N} |a_r|^2
\end{eqnarray}
and
\begin{equation}
- \tfrac{1}{6}\left(\tfrac{1}{\delta^2} + 2 \right) \sum_{r=1}^{N} |a_r|^2 \leq \displaystyle\sum_{\stackrel{r,s=1}{r\neq s}}^N \frac{a_r \overline{a_s}}{\sin^2 \pi (\lambda_r - \lambda_s)} \leq \tfrac{1}{3}\left(\tfrac{1}{\delta^2} - 1 \right)\sum_{r=1}^{N} |a_r|^2
\end{equation}
\end{corollary}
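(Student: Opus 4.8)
The plan is to derive this corollary as a direct specialization of Proposition~\ref{prop12}, plugging in the three closed forms for $q_1$, $p_1$, $p_2$ and the values of the optimal constants $L_m(\delta)$, $U_m(\delta)$ at $m=1,2$. The only genuine work is bookkeeping with signs, powers of $i$, and the Bernoulli evaluations.

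\medskip

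\textbf{Step 1: the $m=1$ cases.} Here $m$ is odd, so inequalities (\ref{Sec6.2}) and (\ref{Sec6.2.1}) apply directly, with no zeta-function correction. I need $L_1(\delta)$ and $U_1(\delta)$. From Proposition~\ref{prop11}, $L_1(\delta) = 2\pi B_1(\alpha_0)/\delta$ and $U_1(\delta) = -2\pi B_1(\beta_0)/\delta$. Now $B_1(x) = x - \tfrac12$, and from (\ref{Intro3}) we have $\alpha_0 = \alpha_{4\cdot 0} = 1 - z_0 = 1$ (since $z_0 = 0$) and $\beta_0 = z_0 = 0$; thus $B_1(\alpha_0) = \tfrac12$ and $B_1(\beta_0) = -\tfrac12$, giving $L_1(\delta) = U_1(\delta) = \pi/\delta$. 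Next, $i^{-1} = -i$, so the middle term of (\ref{Sec6.2}) is $-i\sum_{r\neq s} a_r\overline{a_s}\,p_1(\lambda_r-\lambda_s)$. Using $p_1(x) = \pi/\tan\pi x$, this equals $-i\pi \sum_{r\neq s} a_r\overline{a_s}/\tan\pi(\lambda_r-\lambda_s)$. Since the two-sided bound is $\pm(\pi/\delta)\sum|a_r|^2$, dividing through by $\pi$ and noting the quantity is purely imaginary times a real Hermitian form (hence one may take absolute values) yields exactly the $\tan$ inequality. The $\sin$ inequality comes identically from (\ref{Sec6.2.1}) with $q_1(x) = \pi/\sin\pi x$.

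\medskip

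\textbf{Step 2: the $m=2$ case.} Now $m$ is even, so I use (\ref{Sec6.3}). Here $i^{-2} = -1$, and $\zeta(2) = \pi^2/6$. The constants: $L_2(\delta) = (2\pi)^2 B_2(\alpha_1)/(2!\,\delta^2)$ and $U_2(\delta) = -(2\pi)^2 B_2(\beta_1)/(2!\,\delta^2)$. From (\ref{Intro3}), $\alpha_1 = \alpha_{4\cdot 0 + 1} = 0$ and $\beta_1 = 1/2$, while $B_2(x) = x^2 - x + \tfrac16$ gives $B_2(0) = \tfrac16$ and $B_2(\tfrac12) = -\tfrac1{12}$. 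Hence $L_2(\delta) = 4\pi^2 \cdot \tfrac16 /(2\delta^2) = \pi^2/(3\delta^2)$ and $U_2(\delta) = -4\pi^2 \cdot(-\tfrac1{12})/(2\delta^2) = \pi^2/(6\delta^2)$. With $p_2(x) = \pi^2/\sin^2\pi x$, the middle of (\ref{Sec6.3}) is $-\sum_{r\neq s} a_r\overline{a_s}\,p_2(\lambda_r-\lambda_s) = -\pi^2\sum_{r\neq s} a_r\overline{a_s}/\sin^2\pi(\lambda_r-\lambda_s)$. The correction term $2i^{-2}\zeta(2) = 2(-1)(\pi^2/6) = -\pi^2/3$. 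So (\ref{Sec6.3}) reads $-(-\pi^2/3 + \pi^2/(3\delta^2))\sum|a_r|^2 \le -\pi^2 S \le (\pi^2/3 + \pi^2/(6\delta^2))\sum|a_r|^2$, where $S = \sum_{r\neq s} a_r\overline{a_s}/\sin^2\pi(\lambda_r-\lambda_s)$. Multiplying by $-1$ (which reverses the inequalities) and dividing by $\pi^2$ gives $-\tfrac13(1/\delta^2 + 1/2)\cdot\tfrac12\cdot 2\dots$; carefully: $-\tfrac13 - \tfrac1{6\delta^2} \le -S \le \tfrac13 - \tfrac1{3\delta^2}$, i.e. after flipping, $-\tfrac13(1 - 1/\delta^2)\sum|a_r|^2 \le S \le \tfrac13(1 + 1/(2\delta^2))\sum|a_r|^2$. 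I should double-check this matches the stated $-\tfrac16(1/\delta^2+2)$ and $\tfrac13(1/\delta^2-1)$ bounds — the discrepancy suggests I need to re-examine whether the sign conventions in (\ref{Sec6.3}) are as I read them, and this sign-tracking is the one place the argument could go wrong.

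\medskip

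\textbf{Main obstacle.} There is no conceptual difficulty; everything is a substitution. The only real risk is an arithmetic or sign slip in matching the powers of $i^{-m}$, the parity-dependent corrections, and the Bernoulli values against the exact constants claimed in the corollary. The safest route is to verify each of the three displayed inequalities independently by the substitutions above, and to present the proof simply as: ``Apply Proposition~\ref{prop12} with $m=1$ (resp.\ $m=2$), insert the identities for $q_1$, $p_1$, $p_2$ and the values $B_1(\alpha_0)=\tfrac12$, $B_1(\beta_0)=-\tfrac12$, $B_2(\alpha_1)=\tfrac16$, $B_2(\beta_1)=-\tfrac1{12}$, and simplify; in the $m=2$ case also use $\zeta(2)=\pi^2/6$.'' For the two $m=1$ inequalities one additionally observes that $a_r\overline{a_s} p(\lambda_r-\lambda_s)$ summed over $r\neq s$ is, after the factor $i^{-1}$, a purely imaginary multiple of a real expression symmetric under $(r,s)\mapsto(s,r)$, which justifies taking the absolute value and collapsing the two-sided bound into a single one.
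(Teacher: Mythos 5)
Your approach is exactly the paper's — indeed the paper gives no proof at all, just the sentence ``Using the identities [...] and relations (\ref{Riemann}) we obtain the following interesting special cases'' — so direct substitution into Proposition~\ref{prop12} with $m=1,2$ is the intended argument. Your Step~1 is correct in every detail: $\alpha_0=1$, $\beta_0=0$, $B_1(1)=\tfrac12$, $B_1(0)=-\tfrac12$, so $L_1(\delta)=U_1(\delta)=\pi/\delta$; the factor $\pi$ from $p_1,q_1$ cancels against the $\pi$ in the bound to give $1/\delta$; and the skew-symmetry observation ($p_1,q_1$ odd, hence $\sum_{r\neq s}a_r\overline{a_s}p_1(\lambda_r-\lambda_s)$ purely imaginary, so $i^{-1}$ times it is real) is the right justification for collapsing the two-sided bound into an absolute value.

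In Step~2 you flagged a possible mismatch with the stated constants, but the discrepancy is only an arithmetic slip on your side, not in the corollary. Starting from your own (correct) reading of (\ref{Sec6.3}),
\[
\Bigl(\tfrac{\pi^2}{3}-\tfrac{\pi^2}{3\delta^2}\Bigr)\sum_r|a_r|^2 \;\le\; -\pi^2 S \;\le\; \Bigl(\tfrac{\pi^2}{3}+\tfrac{\pi^2}{6\delta^2}\Bigr)\sum_r|a_r|^2,
\]
dividing by $\pi^2$ gives $\tfrac13-\tfrac1{3\delta^2}\le -S\le \tfrac13+\tfrac1{6\delta^2}$ (you inadvertently wrote the bounds on $S$ while labelling the middle as $-S$). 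Negating and reversing the inequalities then yields
\[
-\tfrac13-\tfrac1{6\delta^2}\;\le\; S \;\le\; -\tfrac13+\tfrac1{3\delta^2},
\]
which is precisely $-\tfrac16\bigl(\tfrac1{\delta^2}+2\bigr)\le S\le\tfrac13\bigl(\tfrac1{\delta^2}-1\bigr)$, matching the corollary. So the substitution route you chose is complete and correct once this last negation is carried out cleanly; the values $L_2(\delta)=\pi^2/(3\delta^2)$, $U_2(\delta)=\pi^2/(6\delta^2)$, $2i^{-2}\zeta(2)=-\pi^2/3$ are all right.
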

Expression (\ref{Sec6.5}) was used by H.L. Montgomery in \cite{M} as a step in the proof of the large sieve inequality.

\section*{Acknowledgments}
I would like to thank Jeffrey Vaaler for all the motivation and insightful conversations on this subject. I would also like to thank the referee of this paper for the valuable suggestions. It has come to my attention, thanks to Michael Ganzburg, that results similar to Theorems 1 and 2 already appear in the literature with different proofs (see \cite{AK}, \cite{F}
and \cite{Ga}).


\begin{thebibliography}{99}

\bibitem{AK}
N.I. Akhiezer and M.G. Krein,
\newblock On the best approximation of periodic functions,
\newblock Dokl. Akad. Nauk. SSSR 15 (1937), 107-112.

\bibitem{BMV}
J.~T.~Barton, H.~L.~Montgomery, and J.~D.~Vaaler,
\newblock Note on a Diophantine inequality in several variables,
\newblock Proc. Amer. Math. Soc., 129, (2000), 337--345.

\bibitem{CV} 
E.~Carneiro and J.~D.~Vaaler,
\newblock Some extremal functions in Fourier analysis,~II,
\newblock preprint.

\bibitem{ET} 
P.~Erd\"{o}s and P.~Tur\'{a}n,
\newblock On a problem in the theory of uniform distribution,
\newblock Indag. Math., 10, (1948), 370--378.

\bibitem{F}
J. Favard,
\newblock Sur les meilleurs proc\'ed\'es d'approximation de certaines
classes de fonctions par des polynomes trigonom\'etriques,
\newblock Bull. Sci. Math. 61 (1937), 209-224, 243-256.

\bibitem{Ga}
T Ganelius,
\newblock On one-sided approximation by trigonometric polynomials,
\newblock Math. Scand. 4 (1956), 247-258.

\bibitem{Leh} 
D.H. Lehmer,
\newblock On the maxima and minima of Bernoulli polynomials,
\newblock Amer. Math. Monthly, 47, (1940), 533--538.


\bibitem{LV} 
X.~J.~Li and J.~D.~Vaaler,
\newblock Some trigonometric extremal functions and the Erd\"{o}s-Tur\'{a}n type inequalities,
\newblock Indiana Univ. Math. J. 48, (1999), no. 1, 183--236.

\bibitem{Lit}
F.~Littmann,
\newblock Entire majorants via Euler-Maclaurin summation,
\newblock Trans. Amer. Math. Soc. 358, (2006), no. 7, 2821--2836.

\bibitem{Lit2} 
F.~Littmann,
\newblock Entire approximations to the truncated powers,
\newblock Constr. Approx. 22 (2005), no. 2, 273--295.

\bibitem{M} 
H.~L.~Montgomery,
\newblock The analytic principle of the large sieve,
\newblock Bull. Amer. Math. Soc. 84, (1978), no. 4, 547--567.

\bibitem{M2}
H.~L.~Montgomery,
\newblock {\em Ten lectures on the interface between analytic number theory and harmonic analysis},
\newblock CBMS No. 84, Amer. Math. Soc., Providence, 1994.

\bibitem{S}
A.~Selberg,
\newblock Lectures on Sieves, {\em Atle Selberg: Collected Papers}, Vol. II,
\newblock Springer-Verlag, Berlin, 1991, pp. 65--247.



\bibitem{V}
J.~D.~Vaaler,
\newblock Some extremal functions in Fourier analysis,
\newblock Bull. Amer. Math. Soc. 12, (1985), 183--215.

\bibitem{V2}
J.~D.~Vaaler,
\newblock Refinements of the Erd\"{o}s-Tur\'{a}n inequality,
\newblock {\em Number Theory with an emphasis on the Markoff spectrum} 
\newblock (Provo, UT, 1991), Lecture Notes in Mathematics, no. 147, 
\newblock Dekker, 1993, 263--269.

\bibitem{Z} 
A.~Zygmund,
\newblock {\em Trigonometric Series},
\newblock Cambridge University Press, 1959.

\end{thebibliography}
\end{document}